\newtheorem{theorem}{Theorem}[section]
\newtheorem{lemma}{Lemma}[section]
\newtheorem{corollary}{Corollary}[section]
\newtheorem{definition}{Definition}[section]
\newtheorem{remark}{Remark}[section]
\newtheorem{remark-definition}{Remark and Definition}[section]
\newtheorem{rem-not}{Remark and Notation}[section]
\begin{document}
\title{\bf Positive Definiteness and Stability of Interval Tensors\thanks{The first author's work was supported by the National Natural Science Foundation of P.R. China (Grant No.12171064)}}
\date{}
\author{ Li Ye, Yisheng Song\thanks{Corresponding author E-mail: yisheng.song@cqnu.edu.cn}\\
	School of Mathematical Sciences,  Chongqing Normal University, \\
	Chongqing, 401331, P.R. China. \\ Email: neutrino1998@126.com (Ye); yisheng.song@cqnu.edu.cn (Song)}
\maketitle

{\noindent\bf Abstract.} In this paper, we focus on the positive definiteness and Hurwitz stability of interval tensors. First, we introduce auxiliary tensors $\mathcal{A}^z$ and establish  equivalent conditions for the positive (semi-)definiteness of interval tensors. That is, an interval tensor is positive definite if and only if all $\mathcal{A}^z$ are positive (semi-)definite. For Hurwitz stability, it is revealed that the stability of the symmetric interval tensor $\mathcal{A}_s^I$ can deduce the stability of the interval tensor $\mathcal{A}^I$, and the stability of symmetric interval tensors is equivalent to that of auxiliary tensors $\tilde{\mathcal{A}}^z$. Finally, taking $4$th order $3$-dimensional interval tensors as examples, the specific sufficient conditions are  built for their positive (semi-)definiteness.

\vspace{.3cm}
{\noindent\bf Mathematics Subject Classification.} 15A69, 90C23, 15A72, 15A63, 90C20, 90C30
\vspace{.3cm}

{\noindent\bf Keywords.} Positive definite, Interval tensor, Hurwitz stability, Fourth order tensors.

\section{Introduction}
\setcounter{section}{1}

In scientific and engineering computations, the uncertainty analysis has always been a core issue. Interval analysis as an effective tool for handling uncertainty, has yielded fruitful results in the theory of interval matrices since its introduction by Moore in the 1960s \cite{a1,a2,a3,a4,a5,a6}. By representing matrix elements as intervals, interval matrices provide a mathematical framework for characterizing uncertainties in system parameters, playing a key role in many areas such as stability analysis of dynamic systems and engineering error estimation \cite{b1,b2,b3,b4}. However, with the advancement of scientific research, many practical problems have extended from linear systems to multilinear systems. Tensors as higher-dimensional generalizations of matrices, have demonstrated strong descriptive capabilities in many fields such as data science, quantum physics, and complex networks \cite{c1,c2,c3,c4,c5,c6,c7,c8,c9,c10,c11}. The interval tensor theory is not only a natural extension of interval matrices analysis theory, but also an urgent need for addressing high dimensional uncertainty problems.

Positive definiteness and stability are important concepts in linear algebra and dynamical systems theory, there have been numerous relevant studies \cite{d1,d2,d3,d4,d5,rohn1994}. 
Rohn \cite{rohn1994} have systematically established criteria for determining positive definiteness, Hurwitz stability, and Schur stability of interval matrices, and proved that an interval matrix possesses a certain property if and only if a finite number of its vertex matrices satisfy that property.

It is well-known that the positive definiteness of tensors is equivalent to all H-eigenvalues (or Z-eigenvalues) being positive (non-negative) \cite{qi2005}.  The sign of minimum eigenvalue may verify the positive definiteness of tensors, and moreover, it may check the stability of multi-linear systems \cite{qi2005,lim2005,e1,e2,e3,e4,e5,e6,e7,e8,ql2017}. For interval tensors, Beheshti et al. \cite{bf2022} extended some classes of interval matrices to classes of interval tensors, and studied some properties and characterizations of them. Cui and Zhang \cite{CZ2023} shown the bounds of H-eigenvalues of an even order real symmetric interval tensor or a nonnegative interval tensor. Rahmati and Tawhid \cite{sr2020} generalize the row-property of a set of matrices to the slice-property of a set of tensors, pointing out that the slice-positive definiteness of a convex tensor set is equivalent to that of its extreme point set (Lemma \ref{lep}). However, for $m$th order $n$-dimensional interval tensors, which is a type of convex tensor set, the number of extreme points is $2^{n^m}$, leading to exponential growth of computational complexity. In terms of Hurwitz stability, Bozorgmanesh et al. \cite{boz2020} have established an equivalence relationship between the stability of symmetric interval tensors and the positive definiteness of negative interval tensors, and provided an estimation range for the real parts of E-eigenvalues by characterizing the counterpart of Bendixson's theorem in tensors, which is analogous to its formulation in matrices, this conclusion only applies to symmetric interval tensors. Other studies on interval tensors can be found in \cite{f1,f2,f3}. Additionally, the relationship between the stability of asymmetric interval tensors and their real symmetricized tensors has not been fully clarified, leaving a theoretical gap.

To address the above issues, based on the existing theory of positive definiteness and stability of interval tensors and combined with specific example analysis, this paper conducts the following core work. To begin with, introduce the auxiliary index set $Y=\{z\in\mathbb{R}^n\mid|z_j|=1\}$, define the specific tensor $\mathcal{A}^z=\mathcal{A}^c-\Delta\times_1 T_z\times_2 T_z\times\cdots\times_m T_z$ (where $T_z$ is a diagonal matrix), and prove that when $m$ is even, the positive definiteness of the interval tensor $\mathcal{A}^I$ can be determined by verifying only $2^{n-1}$ tensors, reducing the computational complexity from $2^{n^m}$ to $2^{n-1}$. In addition, establish the equivalence relationship between the positive definiteness of the interval tensor $\mathcal{A}^I$ and its symmetric interval tensor $\mathcal{A}_s^I$, and prove that if $\mathcal{A}_s^I$ is Hurwitz stable, then $\mathcal{A}^I$ must be Hurwitz stable. Finally, taking $4$th order $3$-dimensional interval tensors as the research object, multiple sets of directly applicable sufficient conditions for positive definiteness are obtained, providing specific tools for the performance analysis of low-order and low-dimensional uncertain systems in engineering.

\section{Preliminaries}
\setcounter{section}{2}

Within this section, we present the necessary definitions and notations related to tensors and interval tensors.

Herein, we will establish the following notation conventions. Vectors will be denoted as $\{x, y, \cdots \}$, matrices will be represented by $\{A, B, \cdots \}$, and tensors will be signified as $\{\mathcal{A}, \mathcal{B}, \cdots \}$. Denote the set of all $m$th order $n$-dimensional real tensors as $T_{m,n}$, and the entries of an $m$th order $n$-dimensional tensor $\mathcal{A}$ are denoted by $(a_{i_1i_2\cdots i_m})$, where $i_j\in[n]=\{1,2,\cdots,n\}$. And entries $a_{ii\cdots i}$ for $i\in[n]$ are called its diagonal entries, and others are called off-diagonal entries. $x\leq(<) y$, $A\leq(<) B$ and $\mathcal{A}\leq(<) \mathcal{B}$ are to be understood componentwise, and we introduce its absolute value as the tensor $|\mathcal{A}|=(|a_{i_1i_2\cdots i_m}|)$.

For any tensor $\mathcal{A} = (a_{i_1i_2\cdots i_m})\in T_{m,n}$, if its entries $a_{i_1i_2\cdots i_m}$ remain unchanged under any permutation of their indices, i.e., for every $\sigma \in S_m$ , where $S_m$ denotes the symmetric group of $m$ elements,
 $$a_{i_1 i_2 \cdots i_m} = a_{\sigma(i_1) \sigma(i_2) \cdots \sigma(i_m)},$$
then $\mathcal{A}$ is termed a symmetric tensor. The set of all $m$th-order $n$-dimensional real symmetric tensors is denoted as $S_{m,n}$.

For an $m$th degree homogeneous polynomial of $n$ variables,
$$f_{\mathcal{A}}(x)=\sum_{i_1\cdots i_m=1}^na_{i_1\cdots i_m}x_{i_1}\cdots x_{i_m}={\mathcal{A}} x^m,$$
where its coefficient tensor $\mathcal{A} = (a_{i_1i_2\cdots i_m})\in T_{m,n}$, there is unique symmetric tensor $\mathcal{A}_s = ((a_s)_{i_1i_2\cdots i_m})\in S_{m,n}$ such that
$$f_{\mathcal{A}}(x)\equiv\sum_{i_1\cdots i_m=1}^n(a_s)_{i_1\cdots i_m}x_{i_1}\cdots x_{i_m},$$
which is the symmetrization of $\mathcal{A}$, and
$$(a_s)_{i_1i_2\cdots i_m}=\frac{1}{m!}\sum_{\sigma \in S_m} a_{\sigma(i_1)\cdots \sigma(i_m)}$$

Let $\underline{\mathcal{A}}=(\underline{a}_{i_1i_2\cdots i_m})$, $\overline{\mathcal{A}}=(\overline{a}_{i_1i_2\cdots i_m})$ be $m$th order $n$-dimensional real tensors, and $\underline{\mathcal{A}}\leq\overline{\mathcal{A}}$. The set of tensors
$$\mathcal{A}^I=[\underline{\mathcal{A}},\overline{\mathcal{A}}]= \{\mathcal{A}:\underline{\mathcal{A}}\leq \mathcal{A}\leq \overline{\mathcal{A}}\}$$
is called an interval tensor.

Denote $\mathcal{A}^c=(a^c_{i_1i_2\cdots i_m})=\frac{\underline{\mathcal{A}}+\overline{\mathcal{A}}}{2}$ and $\Delta=(\delta_{i_1i_2\cdots i_m})=\frac{\overline{\mathcal{A}}-\underline{\mathcal{A}}}{2}$, then $\mathcal{A}^I=[\mathcal{A}^c- \Delta, \mathcal{A}^c+ \Delta]$. Clearly, $\Delta\geq\mathcal{O}$ is a nonnegative tensor, i.e., all of its entries $\delta_{i_1i_2\cdots i_m}$ are nonnegative, where $\mathcal{O}$ is the tensor in $T_{m,n}$ with all its entries being zero. $\mathcal{A}^I$ is said to be symmetric if both $\mathcal{A}^c$ and $\Delta$ are symmetric. With each interval tensor $\mathcal{A}^I=[\mathcal{A}^c- \Delta, \mathcal{A}^c+ \Delta]$ we shall associate the symmetric interval matrix
$$\mathcal{A}^I_s=[\mathcal{A}_s^c- \Delta_s, \mathcal{A}_s^c+ \Delta_s].$$

\begin{remark}
If $\mathcal{A}\in \mathcal{A}^I$, then $\mathcal{A}_s\in \mathcal{A}^I_s$ and $\mathcal{A}^I$ is symmetric if and only if
$\mathcal{A}^I=\mathcal{A}^I_s$.
\end{remark}
For $\mathcal{A}= (a_{i_1\cdots i_m})\in T_{m,n}$ and $x\in\mathbb{R}^n$, $P=(p_{ij})\in\mathbb{R}^{n\times n}$, and $k\in[m]$, $\mathcal{A}\times_1 P\times_2P\times_3\cdots\times_m P$ is a tensor in $T_{m,n}$ which defined by
$$(\mathcal{A}\times_1 P\times_2P\times_3\cdots\times_m P)_{i_1i_2\cdots i_m}=\sum\limits_{j_1j_2\cdots j_m=1}^n a_{j_1j_2\cdots j_m}p_{i_1j_1}\cdots p_{i_mj_m}.$$
And $\mathcal{A}x^{m-1}$ is a vector in $\mathbb{R}^n$ with the $i$th component
$$(\mathcal{A}x^{m-1})_i:=\sum\limits_{i_2,\cdots i_m=1}^n a_{ii_2\cdots i_m}x_{i_2}\cdots x_{i_m},$$
for $1\leq i \leq n$.

\begin{definition}\label{def1}\textup{\cite{qi2005,sr2020,boz2020}}
Let $m$ is even, ${\mathcal{A}},{\mathcal{A}^c,\Delta}\in{\mathcal{T}}_{m,n}$ and $\Delta\geq \mathcal{O}$, ${\mathcal{A}}^I=[\mathcal{A}^c- \Delta, \mathcal{A}^c+ \Delta]$.
\begin{itemize}
\item[(a)] $\mathcal{A}$ is called positive semi-definite if $\mathcal{A}x^m\geq0$ for each $x\in{\mathbb{R}}^n\setminus\{0\}$, $\mathcal{A}^I$ is called positive semi-definite if each $\mathcal{A}\in\mathcal{A}^I$ is positive semi-definite.
\item[(b)] $\mathcal{A}$ is called positive definite if $\mathcal{A}x^m>0$ for each $x\in{\mathbb{R}}^n\setminus\{0\}$, $\mathcal{A}^I$ is called positive definite if each $\mathcal{A}\in\mathcal{A}^I$ is positive definite.
\end{itemize}
\end{definition}

\begin{definition}\label{defh}\textup{\cite{qi2005}}
Let ${\mathcal{A}}\in{\mathcal{T}}_{m,n}$, $\lambda\in\mathbb{C}$ and $x=(x_1,x_2,\cdots,x_n)^\top\in\mathbb{C}^n$.
$\lambda$ is called an eigenvalue and $x$ an eigenvector corresponding to $\lambda$ if
\begin{equation}\label{1.1}
\mathcal{A}x^{m-1}=\lambda x^{[m-1]}
\end{equation}
where $x^{[m-1]}=(x_1^{m-1},x_2^{m-1},\cdots,x_n^{m-1})^\top$.
Moreover, when $\lambda\in\mathbb{R}$ and $x\in\mathbb{R}^n$, $\lambda$ is called an H-eigenvalue and $x$ an H-eigenvector corresponding to $\lambda$.
\end{definition}

\begin{definition}\label{defz}\textup{\cite{qi2005}}
Let ${\mathcal{A}}\in{\mathcal{T}}_{m,n}$, $\lambda\in\mathbb{C}$ and $x=(x_1,x_2,\cdots,x_n)^\top\in\mathbb{C}^n$.
$\lambda$ is called an E-eigenvalue and $x$ an E-eigenvector corresponding to $\lambda$ if
\begin{equation}\label{1.2}
\left
\{\begin{array}{ll}
\mathcal{A}x^{m-1}=\lambda x,\\
x^{\top}x=1.
\end{array}
\right.
\end{equation}
Moreover, when $\lambda\in\mathbb{R}$ and $x\in\mathbb{R}^n$, $\lambda$ is called and Z-eigenvalue and $x$ an Z-eigenvector corresponding to $\lambda$.
\end{definition}

We shall denote by $\lambda_{H~\min}(\mathcal{A})$ ($\lambda_{Z~\min}(\mathcal{A})$) and $\lambda_{H~\max}(\mathcal{A})$ ($\lambda_{Z~\max}(\mathcal{A})$), the minimum and maximum H(Z)-eigenvalue of $\mathcal{A}$, respectively (obviously, $\lambda_{H~\min}(-\mathcal{A})=-\lambda_{H~\max}(\mathcal{A})$, $\lambda_{Z~\min}(-\mathcal{A})=-\lambda_{Z~\max}(\mathcal{A})$). Denote the H-spectral radius by $\rho_H(\mathcal{A})$.

\begin{lemma}\label{HP}\textup{\cite{qi2005}}
Let $\mathcal{A}\in S_{m,n}$ and $m$ is even.
\begin{itemize}
  \item [(a)] $\mathcal{A}$ is positive (semi-)definite if and only if all of its H-eigenvalues are positive (nonnegative), which means $\lambda_{H~\min}(\mathcal{A})>(\geq)0$.
  \item [(b)] $\mathcal{A}$ is positive (semi-)definite if and only if all of its Z-eigenvalues are positive (nonnegative), which means $\lambda_{Z~\min}(\mathcal{A})>(\geq)0$.
\end{itemize}
\end{lemma}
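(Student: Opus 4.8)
The plan is to realize the minimal H- and Z-eigenvalues as constrained minima of the degree-$m$ form $f_{\mathcal{A}}(x)=\mathcal{A}x^m$ and then exploit its homogeneity; I will carry out part (a) in detail and indicate the parallel argument for (b).

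For (a), I would minimize $f_{\mathcal{A}}(x)=\mathcal{A}x^m$ over $S=\{x\in\mathbb{R}^n:\sum_{i=1}^n x_i^m=1\}$. Since $m$ is even, $x_i^m\geq0$ for every $i$, so $S$ is closed and bounded, hence compact, and the continuous function $f_{\mathcal{A}}$ attains its minimum at some $x^*\in S$. Using $\mathcal{A}\in S_{m,n}$ one has $\nabla f_{\mathcal{A}}(x)=m\,\mathcal{A}x^{m-1}$, while the gradient of the constraint $\sum_i x_i^m$ is $m\,x^{[m-1]}$; the Lagrange condition at $x^*$ therefore reads $\mathcal{A}(x^*)^{m-1}=\lambda (x^*)^{[m-1]}$, so $\lambda$ is an H-eigenvalue with H-eigenvector $x^*$. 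Contracting with $x^*$ and using $\mathcal{A}x^m=(\mathcal{A}x^{m-1})^\top x$ gives $f_{\mathcal{A}}(x^*)=\lambda\sum_i(x_i^*)^m=\lambda$. Conversely, any H-eigenpair, after rescaling its eigenvector so that $\sum_i x_i^m=1$ (which leaves $\lambda$ unchanged), lies in $S$ with $f_{\mathcal{A}}=\lambda$; hence every H-eigenvalue is bounded below by $\min_S f_{\mathcal{A}}$, and since $\min_S f_{\mathcal{A}}$ is itself an H-eigenvalue, I obtain $\min_S f_{\mathcal{A}}=\lambda_{H~\min}(\mathcal{A})$.

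Next I would pass to arbitrary $x\neq0$ by homogeneity: with $t=(\sum_i x_i^m)^{1/m}>0$ (positive because $m$ is even and $x\neq0$) one has $x/t\in S$ and $f_{\mathcal{A}}(x)=t^m f_{\mathcal{A}}(x/t)\geq(\sum_i x_i^m)\,\lambda_{H~\min}(\mathcal{A})$. As $\sum_i x_i^m>0$, this shows $f_{\mathcal{A}}(x)>0$ for all $x\neq0$ as soon as $\lambda_{H~\min}(\mathcal{A})>0$, and $f_{\mathcal{A}}(x)\geq0$ as soon as $\lambda_{H~\min}(\mathcal{A})\geq0$. The converse implications are immediate, since for any H-eigenpair $(\lambda,x)$ we have $\mathcal{A}x^m=\lambda\sum_i x_i^m$ with $\sum_i x_i^m>0$, forcing $\lambda>0$ (resp. $\geq0$) when $\mathcal{A}$ is positive (semi-)definite. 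This settles (a). For (b) I would repeat the argument verbatim with $S$ replaced by the Euclidean sphere $\{x:x^\top x=1\}$, whose constraint gradient is $2x$: the Lagrange condition becomes $\mathcal{A}x^{m-1}=\lambda x$ with $x^\top x=1$, i.e. the Z-eigenvalue equation, contraction again yields critical value $\lambda$, and with $t=(x^\top x)^{1/2}$ the same homogeneity and converse steps give the equivalence with $\lambda_{Z~\min}(\mathcal{A})$.

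The hard part will be the two structural facts that make the correspondence between stationary points and eigenpairs exact: the gradient identity $\nabla f_{\mathcal{A}}(x)=m\,\mathcal{A}x^{m-1}$, which holds precisely because $\mathcal{A}$ is symmetric (for a general tensor only its symmetrization would appear), and the use of ``$m$ even'' both to make the level set $\{\sum_i x_i^m=1\}$ compact and to guarantee $\sum_i x_i^m>0$ for $x\neq0$. Once these are in place the result reduces to compactness of the constraint set and homogeneity of $f_{\mathcal{A}}$, so no further difficulty is expected.
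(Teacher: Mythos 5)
Your proposal is correct and is essentially the standard argument from the cited source (Qi, 2005): the paper states this lemma without proof, and the proof there is exactly your variational one, realizing $\lambda_{H\,\min}$ and $\lambda_{Z\,\min}$ as constrained minima of $\mathcal{A}x^m$ over $\{\sum_i x_i^m=1\}$ and $\{x^\top x=1\}$ via Lagrange multipliers and then invoking homogeneity and the evenness of $m$. The only detail left tacit is the constraint qualification, which holds because $x^{[m-1]}\neq 0$ (resp.\ $x\neq 0$) on the constraint set, so there is no gap.
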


\begin{lemma}\label{Cth}\textup{\cite{ql2017}}
Let $\mathcal{A}\in S_{m,n}$. Then the eigenvalues of of $\mathcal{A}$ lie in the union of $n$ disks in $\mathbb{C}$. These $n$ disks have the diagonal entries of $\mathcal{A}$ as their centers, and the sums of the absolute values of the off-diagonal entries as their radii.
\end{lemma}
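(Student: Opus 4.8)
The plan is to adapt the classical Gershgorin disk argument to the tensor setting, working directly from the eigenvalue equation of Definition \ref{defh}. Let $\lambda\in\mathbb{C}$ be an eigenvalue of $\mathcal{A}$ with eigenvector $x=(x_1,\ldots,x_n)^\top\in\mathbb{C}^n\setminus\{0\}$, so that $\mathcal{A}x^{m-1}=\lambda x^{[m-1]}$. First I would select an index $k\in[n]$ at which the modulus of the eigenvector is largest, that is, $|x_k|=\max_{1\le i\le n}|x_i|$; since $x\neq 0$ this maximum is strictly positive, so $|x_k|^{m-1}>0$, and this positivity is what will license the final division.

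Next I would write out the $k$th coordinate of the eigenvalue equation using the component formula for $\mathcal{A}x^{m-1}$, namely
$$\sum_{i_2,\ldots,i_m=1}^n a_{ki_2\cdots i_m}x_{i_2}\cdots x_{i_m}=\lambda x_k^{m-1},$$
and then split off the diagonal term $a_{kk\cdots k}x_k^{m-1}$, moving it to the right-hand side to obtain
$$(\lambda-a_{kk\cdots k})\,x_k^{m-1}=\sum_{(i_2,\ldots,i_m)\neq(k,\ldots,k)} a_{ki_2\cdots i_m}\,x_{i_2}\cdots x_{i_m}.$$

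The core estimate is then to take moduli on both sides and apply the triangle inequality, giving $|\lambda-a_{kk\cdots k}|\,|x_k|^{m-1}\le\sum |a_{ki_2\cdots i_m}|\,|x_{i_2}|\cdots|x_{i_m}|$, where the sum ranges over all multi-indices $(i_2,\ldots,i_m)\neq(k,\ldots,k)$. Because $|x_{i_j}|\le|x_k|$ for every $j$ by maximality, each product $|x_{i_2}|\cdots|x_{i_m}|$ is bounded above by $|x_k|^{m-1}$; factoring this common bound out and dividing through by $|x_k|^{m-1}>0$ yields
$$|\lambda-a_{kk\cdots k}|\le\sum_{(i_2,\ldots,i_m)\neq(k,\ldots,k)} |a_{ki_2\cdots i_m}|,$$
which says precisely that $\lambda$ lies in the $k$th disk, centered at the diagonal entry $a_{kk\cdots k}$ with radius equal to the sum of the absolute values of the off-diagonal entries indexed by $k$ in the first position. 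Since the chosen $k$ depends on $\lambda$, every eigenvalue lands in at least one of these $n$ disks, and hence in their union.

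I expect no genuine obstacle here: the argument is essentially a transcription of the matrix Gershgorin proof, and in fact the symmetry of $\mathcal{A}$ is never invoked, so the statement would hold for any $\mathcal{A}\in T_{m,n}$. The only step requiring real care is the maximality estimate $|x_{i_2}|\cdots|x_{i_m}|\le|x_k|^{m-1}$, which replaces the single-factor bound of the matrix case; it is exactly this uniform bound over all $m-1$ trailing indices that forces the radius to collect all off-diagonal entries of the $k$th slice rather than a weighted subset.
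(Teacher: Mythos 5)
Your argument is correct and is essentially the standard proof of the tensor Gershgorin theorem given in the cited reference \cite{ql2017} (the paper itself states this lemma without proof): pick a component of maximal modulus, isolate the diagonal term, and bound the off-diagonal sum using $|x_{i_2}|\cdots|x_{i_m}|\le |x_k|^{m-1}$. Your observation that symmetry is never used is also accurate --- the inclusion holds for arbitrary $\mathcal{A}\in T_{m,n}$.
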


\vspace{.3cm}

\section{Positive definiteness of interval tensors}
\setcounter{section}{3}

Rahmati and Tawhid showed that a compact convex set $\mathcal{C}$ of tensors has the slice-positive-(semi-)definite-property if
and only if the set $\mathcal{X}$ of extreme points of $\mathcal{C}$, has the slice-positive-(semi-)definite-property. $\mathcal{C}$ has the slice-positive-(semi-)definite-property if every tensor in $\hat{\mathcal{C}}$ is an positive-(semi-)definite-tensor, where slice-completion $\hat{\mathcal{C}}$ is the set of all tensors that have one slice in common with a tensor in $\mathcal{C}$ \cite{sr2020}.
A point $x \in\mathcal{C}$ is called an extreme point of $\mathcal{C}$ if whenever we have $x =(1-k)y + k z$, for some $y, z \in \mathcal{C}$ and $k \in (0, 1)$, we conclude $x = y = z$; see \cite{spm}. In the following we always assume $m$ is even, since this is only meaningful for even order tensors.

\begin{lemma}\label{lep}\textup{\cite{sr2020}}
Suppose that $\mathcal{C}$ is a compact, convex set in $T_{m,n}$. Let $\mathcal{X}$ be the set of all extreme points of $\mathcal{C}$. Then $\mathcal{C}$ has the slice-positive-(semi-)definite-property if and only if $\mathcal{X}$ has the slice-positive-(semi-)definite-property.
\end{lemma}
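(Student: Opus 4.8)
The plan is to prove the two implications separately, with the forward direction essentially free and the converse carrying all the content. Since $\mathcal{X}\subseteq\mathcal{C}$, every slice occurring in a member of $\mathcal{X}$ already occurs in a member of $\mathcal{C}$, so slice-completion is monotone, $\hat{\mathcal{X}}\subseteq\hat{\mathcal{C}}$. Hence if $\mathcal{C}$ has the slice-positive-(semi-)definite-property, i.e.\ every tensor in $\hat{\mathcal{C}}$ is positive (semi-)definite, then a fortiori every tensor in $\hat{\mathcal{X}}$ is, which is the property for $\mathcal{X}$. The work is in the converse: assuming every tensor in $\hat{\mathcal{X}}$ is positive (semi-)definite, I would show the same for $\hat{\mathcal{C}}$, i.e.\ that $\mathcal{B}x^m\ge 0$ (respectively $>0$) for every $\mathcal{B}\in\hat{\mathcal{C}}$ and every $x\in\mathbb{R}^n\setminus\{0\}$.

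The key structural device is the slicewise splitting $\mathcal{A}x^m=\sum_{i=1}^n x_i(\mathcal{A}x^{m-1})_i$, in which the $i$th summand $x_i(\mathcal{A}x^{m-1})_i$ depends only on the $i$th slice $(a_{i i_2\cdots i_m})_{i_2,\dots,i_m}$ of $\mathcal{A}$. A tensor of $\hat{\mathcal{C}}$ is assembled by choosing each slice, independently, to coincide with the corresponding slice of some (slice-dependent) member of $\mathcal{C}$; therefore, fixing $x$ and minimizing $\mathcal{B}x^m$ over $\mathcal{B}\in\hat{\mathcal{C}}$ decouples completely across slices,
\begin{equation*}
\min_{\mathcal{B}\in\hat{\mathcal{C}}}\mathcal{B}x^m=\sum_{i=1}^n\ \min_{\mathcal{A}\in\mathcal{C}} x_i(\mathcal{A}x^{m-1})_i .
\end{equation*}
For each fixed $i$ and $x$, the map $\mathcal{A}\mapsto x_i(\mathcal{A}x^{m-1})_i$ is a continuous linear functional on $T_{m,n}$, so by Bauer's minimum principle its minimum over the compact convex set $\mathcal{C}$ is attained at an extreme point $\mathcal{E}^{(i)}\in\mathcal{X}$.

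I would then reassemble the minimizers: let $\mathcal{B}^\ast$ be the tensor whose $i$th slice is the $i$th slice of $\mathcal{E}^{(i)}$ for each $i$. By construction $\mathcal{B}^\ast\in\hat{\mathcal{X}}$ and $\mathcal{B}^\ast x^m=\min_{\mathcal{B}\in\hat{\mathcal{C}}}\mathcal{B}x^m$. The hypothesis on $\mathcal{X}$ gives $\mathcal{B}^\ast x^m\ge 0$ (and $>0$ when $x\ne 0$, in the definite case), whence $\mathcal{B}x^m\ge \mathcal{B}^\ast x^m\ge 0$ for all $\mathcal{B}\in\hat{\mathcal{C}}$; since $x\ne 0$ was arbitrary, every $\mathcal{B}\in\hat{\mathcal{C}}$ is positive (semi-)definite, establishing the property for $\mathcal{C}$. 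The strict and non-strict cases run in parallel, the only delicate point being that in the definite case the extremal minimizer $\mathcal{B}^\ast=\mathcal{B}^\ast_x$ varies with $x$, so positivity must be read off pointwise rather than uniformly. I expect the main obstacle to be rigorously justifying the displayed decoupling — that the independence of the slice choices in the slice-completion genuinely lets the single minimization split into $n$ separate slicewise minimizations — together with a clean invocation of the extreme-point (Bauer) principle over the compact convex $\mathcal{C}$; once these are secured, the reassembly and the conclusion are immediate.
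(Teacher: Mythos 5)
Your argument is correct and complete; note that the paper itself offers no proof of this lemma --- it is quoted verbatim from \cite{sr2020} --- so there is no in-paper proof to compare against, and your write-up in effect supplies the standard argument behind the cited result. The forward direction via monotonicity of slice-completion ($\mathcal{X}\subseteq\mathcal{C}\Rightarrow\hat{\mathcal{X}}\subseteq\hat{\mathcal{C}}$) is fine, and the converse is carried exactly by the two ingredients you identify: the slicewise splitting $\mathcal{A}x^m=\sum_i x_i(\mathcal{A}x^{m-1})_i$, whose $i$th term is a \emph{linear} functional of $\mathcal{A}$ depending only on the $i$th slice, so that the minimization over $\hat{\mathcal{C}}$ decouples into $n$ independent linear minimizations over the compact convex set $\mathcal{C}$, each attained at an extreme point (Bauer/Minkowski in the finite-dimensional space $T_{m,n}$, where $\mathcal{X}\neq\emptyset$); reassembling the slicewise minimizers into $\mathcal{B}^\ast\in\hat{\mathcal{X}}$ then yields the claim, with the correct pointwise-in-$x$ reading in the definite case. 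The only caution is definitional: the paper's paraphrase of slice-completion (``tensors that have one slice in common with a tensor in $\mathcal{C}$'') is garbled --- read literally it would make the property vacuous --- and your proof rests on the intended reading from \cite{sr2020}, namely that \emph{each} slice of a tensor in $\hat{\mathcal{C}}$ is the corresponding slice of some (slice-dependent) member of $\mathcal{C}$; it is worth stating that reading explicitly before invoking the decoupling identity.
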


If $\mathcal{C}=\mathcal{A}^I=[\underline{\mathcal{A}},\overline{\mathcal{A}}]$, then the set $\mathcal{X}$ of all extreme points of $\mathcal{C}$ is $$\mathcal{E}=\{\mathcal{A}=(a_{i_1i_2\cdots i_m})\in T_{m,n}:a_{i_1i_2\cdots i_m}=\underline{a}_{i_1i_2\cdots i_m} \mbox{ or } a_{i_1i_2\cdots i_m}=\overline{a}_{i_1i_2\cdots i_m}, i_1,i_2,\cdots, i_m\in[n]\}.$$
Therefore, the following result can be obtained by Lemma \ref{lep}.

\begin{corollary}\label{cor1}
Let $\mathcal{A}^I=[\underline{\mathcal{A}},\overline{\mathcal{A}}]$ be an interval tensor in $T_{m,n}$. Then $\mathcal{E}$ has is positive (semi-)definite if and only if $\mathcal{C}$ is positive (semi-)definite.
\end{corollary}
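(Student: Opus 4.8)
The plan is to reduce the statement to Lemma \ref{lep} by verifying its hypotheses, while in parallel giving a direct argument that bypasses the slice machinery. First I would check that $\mathcal{C}=\mathcal{A}^I=[\underline{\mathcal{A}},\overline{\mathcal{A}}]$ is a compact convex subset of $T_{m,n}$: it is a closed box, namely the intersection of the componentwise constraints $\underline a_{i_1\cdots i_m}\le a_{i_1\cdots i_m}\le \overline a_{i_1\cdots i_m}$, and is therefore bounded, closed, and convex. Next I would confirm that its extreme-point set is exactly the finite collection $\mathcal{E}$ of vertex tensors whose every entry equals either $\underline a_{i_1\cdots i_m}$ or $\overline a_{i_1\cdots i_m}$; this is the standard description of the vertices of a product of intervals, and it is precisely the identification already recorded before the corollary. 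With these two facts in hand, Lemma \ref{lep} applies directly and yields the equivalence.

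For a self-contained argument I would exploit that, for each fixed $x$, the map $\mathcal{A}\mapsto \mathcal{A}x^m$ is affine-linear in the entries of $\mathcal{A}$. Writing
$$\mathcal{A}x^m=\sum_{i_1,\dots,i_m=1}^n a_{i_1\cdots i_m}\,x_{i_1}\cdots x_{i_m},$$
the coefficient of $a_{i_1\cdots i_m}$ is the fixed scalar $x_{i_1}\cdots x_{i_m}$, and since $\mathcal{C}$ is a box the entries vary independently over their respective intervals. Hence $\mathcal{A}x^m$ is separable and is minimized over $\mathcal{C}$ by choosing each entry at its lower endpoint when $x_{i_1}\cdots x_{i_m}>0$, at its upper endpoint when $x_{i_1}\cdots x_{i_m}<0$, and arbitrarily when the product vanishes; the resulting minimizer is a vertex tensor lying in $\mathcal{E}$. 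Consequently, for every $x\in\mathbb{R}^n\setminus\{0\}$,
$$\min_{\mathcal{A}\in\mathcal{C}}\mathcal{A}x^m=\min_{\mathcal{A}\in\mathcal{E}}\mathcal{A}x^m.$$

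The equivalence then follows in both directions. The implication that $\mathcal{C}$ positive (semi-)definite forces $\mathcal{E}$ positive (semi-)definite is immediate from $\mathcal{E}\subseteq\mathcal{C}$. Conversely, if $\mathcal{E}$ is positive (semi-)definite then for each fixed $x\ne 0$ the right-hand side above is a minimum over the \emph{finite} set $\mathcal{E}$ of values that are all positive (respectively nonnegative), hence is itself positive (respectively nonnegative); therefore every $\mathcal{A}\in\mathcal{C}$ satisfies $\mathcal{A}x^m>0$ (respectively $\ge 0$), so $\mathcal{C}$ is positive (semi-)definite. I expect the only genuinely delicate points to be the bookkeeping showing that the entrywise minimizer really lies in $\mathcal{E}$ (independence of the entries and the treatment of indices with $x_{i_1}\cdots x_{i_m}=0$), and the fact that finiteness of $\mathcal{E}$ is essential in the definite case, since an infimum of strictly positive values over an infinite set could collapse to zero. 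Matching the slice-positive-(semi-)definite-property of Lemma \ref{lep} to ordinary positive (semi-)definiteness of the box is then routine once the box structure is made explicit.
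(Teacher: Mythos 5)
Your proposal is correct, and it actually contains two arguments: the first paragraph reproduces the paper's entire proof (the paper offers only the observation that $\mathcal{A}^I$ is a compact convex box whose extreme points are exactly $\mathcal{E}$, and then cites Lemma \ref{lep}), while the rest is a genuinely different, self-contained route. Your direct argument --- fix $x$, note that $\mathcal{A}\mapsto\mathcal{A}x^m$ is linear in the independent entries of the box, so the minimum over $\mathcal{C}$ is attained at a vertex and hence $\min_{\mathcal{A}\in\mathcal{C}}\mathcal{A}x^m=\min_{\mathcal{A}\in\mathcal{E}}\mathcal{A}x^m$, with finiteness of $\mathcal{E}$ handling the strict case --- is sound and arguably preferable. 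The reason is the caveat you flag at the end but dismiss as routine: Lemma \ref{lep} is stated for the \emph{slice}-positive-(semi-)definite property, which quantifies over the slice-completion $\hat{\mathcal{C}}$ and is in general strictly stronger than ordinary positive (semi-)definiteness of $\mathcal{C}$; the paper silently conflates the two notions when deducing the corollary. Your separable-minimization argument bypasses the slice machinery entirely and proves exactly the statement as written, so it closes a gap the paper leaves open rather than merely restating its proof. The only point worth tightening is the claim about indices with $x_{i_1}\cdots x_{i_m}=0$: there the choice of endpoint is immaterial precisely because the corresponding term contributes nothing, so the minimizer can always be taken in $\mathcal{E}$, as you say.
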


Based on this result, verifying the positive definiteness of an interval tensor $\mathcal{A}^I$ in $T_{m,n}$ requires checking the positive definiteness of $2^{n^m}$ tensors as the cardinality of $\mathcal{E}$ is $2^{n^m}$. In what follows, we will improve this result, reducing the number of tensors that need verification to $2^{n-1}$.

First it need to define an auxiliary index set
$Y= \{z\in \mathbb{R}^n; |z_j|=1 \mbox{ for } j=1,\cdots,n\},$
which has a cardinality of $2^n$. For each $z\in Y$ we denote by $T_z$ as the $n\times n$ diagonal matrix whose diagonal entries from the vector $z$. Next, for each $z\in Y$ let us introduce the matrix $\mathcal{A}^z$ via the expression,
$$\mathcal{A}^z=(a^z_{i_1i_2\cdots i_m})=\mathcal{A}^c-\Delta\times_1 T_z\times_2 T_z\times_3\cdots\times_m T_z.$$
For each $i_j\in[n]$, $j=1,2,\cdots,m$, the entry $a^z_{i_1i_2\cdots i_m}$ of $\mathcal{A}^z$ satisfies,
 $$a^z_{i_1i_2\cdots i_m}=a^c_{i_1i_2\cdots i_m}- \delta_{i_1i_2\cdots i_m}\mbox{ if }z_{i_1}z_{i_2}\cdots z_{i_m}=1,$$
 $$a^z_{i_1i_2\cdots i_m}=a^c_{i_1i_2\cdots i_m}+ \delta_{i_1i_2\cdots i_m}\mbox{ if }z_{i_1}z_{i_2}\cdots z_{i_m}=-1.$$
 Thus, $\mathcal{A}^z$ belongs to $\mathcal{A}^I$ for every $z\in Y$. When $m$ is even, since $\mathcal{A}^{-z}=\mathcal{A}^z$, the number of distinct tensors $\mathcal{A}^z$ is at most $2^{n-1}$ (and equal to $2^{n-1}$ if $\Delta> 0$). If $\mathcal{A}^I$ is symmetric, each $\mathcal{A}^z$ is symmetric as well. These tensors $\mathcal{A}^z$, $z\in Y$, will be employed to characterize positive (semi-)definiteness of an interval tensor through finite procedures.

Let introduce a function $g: T_{m,n}\rightarrow \mathbb{R}$ defined for a tensor $\mathcal{A}\in T_{m,n}$ by
\begin{equation}\label{gx}
g(\mathcal{A})=\min\limits_{x\neq0}\frac{\mathcal{A}x^m}{\sum\limits_{i=1}x_i^{m}}.
\end{equation}
Obviously, $g$ is well defined. In the following theorem we sum up the basic properties of $g$ that will be used in the proofs of the main theorems in the subsequent sections.

\begin{theorem}\label{th1}
The function $g$ has the following properties.
\begin{itemize}
  \item [$(\romannumeral1)$] For each $\mathcal{A}\in T_{m,n}$, $g(\mathcal{A}) =g(\mathcal{A}_s)$.
  \item [$(\romannumeral2)$] For each $\mathcal{A}\in S_{m,n}$, $g( \mathcal{A} )=\lambda_{H~\min} (\mathcal{A})$.
  \item [$(\romannumeral3)$] For each $\mathcal{A},\mathcal{D}\in T_{m,n}$, $|g(\mathcal{A} + \mathcal{D}) -g(\mathcal{A})| \leq\rho_H(\mathcal{D}_s)$.
  \item [$(\romannumeral4)$] $g$ is continuous in $T_{m,n}$;
  \item [$(\romannumeral5)$] There is
  $$\min \{g(\mathcal{A});\mathcal{A}\in \mathcal{A}^I\}=\min \{g(\mathcal{A}^z); z\in Y\}$$
  for each interval tensor $\mathcal{A}^I$.
  \item [$(\romannumeral6)$] There is
  $$\min \{g(\mathcal{A});\mathcal{A}\in \mathcal{A}^I\}=\min \{g(\mathcal{A}); \mathcal{A}\in \mathcal{A}_s^I\}$$
  for each interval tensor $\mathcal{A}^I$.
\end{itemize}
\end{theorem}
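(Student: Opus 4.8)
The plan is to prove the six items in the order listed, since each rests on its predecessors: $(\romannumeral1)$ and $(\romannumeral2)$ are the foundation, $(\romannumeral3)$ follows from them via a super-additivity property of $\min$, $(\romannumeral4)$ is a corollary of $(\romannumeral3)$ together with Lemma \ref{Cth}, $(\romannumeral5)$ is the combinatorial core, and $(\romannumeral6)$ combines $(\romannumeral1)$ and $(\romannumeral5)$.

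For $(\romannumeral1)$ I would simply note that $\mathcal{A}x^m = f_{\mathcal{A}}(x) = \mathcal{A}_s x^m$ holds as a polynomial identity for every $x$ by the definition of the symmetrization, so the numerator defining $g$ is unchanged and hence $g(\mathcal{A}) = g(\mathcal{A}_s)$. For $(\romannumeral2)$, since $m$ is even the set $\{x : \sum_i x_i^m = 1\}$ is compact and $\sum_i x_i^m>0$ for $x\neq 0$, so $g(\mathcal{A}) = \min\{\mathcal{A}x^m : \sum_i x_i^m = 1\}$ is attained; a Lagrange-multiplier computation shows that any minimizer $x$ satisfies $\mathcal{A}x^{m-1} = \lambda x^{[m-1]}$ with $\lambda = \mathcal{A}x^m$, i.e.\ is an H-eigenpair, and conversely every H-eigenvalue arises as such a critical value. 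Thus the minimum equals $\lambda_{H\min}(\mathcal{A})$; this is the variational characterization of the least H-eigenvalue from \cite{qi2005}, which I would invoke here.

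For $(\romannumeral3)$ the key observation is that, for fixed $x$, the quotients defining $g$ add, so $\min$ is super-additive: $g(\mathcal{A}+\mathcal{D}) \geq g(\mathcal{A}) + g(\mathcal{D})$. Applying this once in each direction (writing $\mathcal{A} = (\mathcal{A}+\mathcal{D}) + (-\mathcal{D})$) gives $g(\mathcal{D}) \leq g(\mathcal{A}+\mathcal{D}) - g(\mathcal{A}) \leq -g(-\mathcal{D})$. By $(\romannumeral1)$ and $(\romannumeral2)$, together with linearity of symmetrization and $\lambda_{H\min}(-\mathcal{D}_s) = -\lambda_{H\max}(\mathcal{D}_s)$, the two ends equal $\lambda_{H\min}(\mathcal{D}_s)$ and $\lambda_{H\max}(\mathcal{D}_s)$ respectively; both have modulus at most $\rho_H(\mathcal{D}_s)$, which yields the stated bound. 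Then $(\romannumeral4)$ is immediate: for $\mathcal{A}_k \to \mathcal{A}$ apply $(\romannumeral3)$ with $\mathcal{D} = \mathcal{A}_k - \mathcal{A}$, and Lemma \ref{Cth} bounds $\rho_H((\mathcal{A}_k-\mathcal{A})_s)$ by the largest Ger\v{s}gorin disk radius, which tends to $0$ as the entries do.

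The heart of the theorem is $(\romannumeral5)$, and this is where I expect the main difficulty. Since $\mathcal{A}^z \in \mathcal{A}^I$ for every $z$, the inequality $\leq$ is trivial, so the work is to show that for an arbitrary $\mathcal{A} \in \mathcal{A}^I$ some vertex tensor $\mathcal{A}^z$ satisfies $g(\mathcal{A}^z) \leq g(\mathcal{A})$. I would take a minimizer $x^*$ of the quotient for $\mathcal{A}$ and set $z_j = \operatorname{sign}(x^*_j)$, choosing $z_j = 1$ when $x^*_j = 0$. The point is that $\mathcal{A}^z$ is exactly the entrywise minimizer of $\mathcal{B} \mapsto \mathcal{B}(x^*)^m$ over $\mathcal{B} \in \mathcal{A}^I$: term by term, $z_{i_1}\cdots z_{i_m} = \operatorname{sign}(x^*_{i_1}\cdots x^*_{i_m})$ forces the subtraction of $\delta_{i_1\cdots i_m}$ precisely when $x^*_{i_1}\cdots x^*_{i_m} \geq 0$ and its addition otherwise, while terms containing a zero factor contribute nothing and so are unaffected by the ambiguous choice of $z_j$. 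The care needed here is exactly in making this term-by-term argument airtight in the presence of vanishing coordinates. Consequently $\mathcal{A}^z(x^*)^m \leq \mathcal{A}(x^*)^m$, and dividing by $\sum_i (x^*_i)^m$ gives $g(\mathcal{A}^z) \leq g(\mathcal{A})$; minimizing over $\mathcal{A} \in \mathcal{A}^I$ yields the reverse inequality and hence equality. Finally $(\romannumeral6)$ follows by combining $(\romannumeral1)$ and $(\romannumeral5)$: scaling by $T_z$ multiplies the $(i_1,\dots,i_m)$ entry by $z_{i_1}\cdots z_{i_m}$, a factor symmetric under index permutations, so this operation commutes with symmetrization, whence $(\mathcal{A}^z)_s$ is precisely the auxiliary tensor $\tilde{\mathcal{A}}^z = \mathcal{A}_s^c - \Delta_s \times_1 T_z \times_2 T_z \times \cdots \times_m T_z$ built from $\mathcal{A}_s^I$. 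Then $g(\mathcal{A}^z) = g((\mathcal{A}^z)_s) = g(\tilde{\mathcal{A}}^z)$ by $(\romannumeral1)$, and applying $(\romannumeral5)$ to both $\mathcal{A}^I$ and $\mathcal{A}_s^I$ equates the two minima.
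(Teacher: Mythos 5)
Your proposal is correct and follows essentially the same route as the paper's proof: the polynomial identity for $(\romannumeral1)$, the variational characterization of $\lambda_{H\min}$ for $(\romannumeral2)$, super-additivity of $g$ for $(\romannumeral3)$--$(\romannumeral4)$, the sign-vector choice $z=\operatorname{sgn}(x)$ (with the same convention at zero coordinates) making $\mathcal{A}^z$ the termwise minimizer for $(\romannumeral5)$, and the identity $(\mathcal{A}^z)_s=\widehat{\mathcal{A}}^z$ for $(\romannumeral6)$. The only cosmetic difference is that you evaluate at the minimizer $x^*$ while the paper runs the pointwise inequality for arbitrary $x$, which changes nothing of substance.
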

\begin{proof} $(\romannumeral1)$ It can be obtained according to the definition of symmetrization.

$(\romannumeral2)$ See the proof of Theorem 2.2 in Ref. \cite{ql2017}.

$(\romannumeral3)$ For each $\mathcal{A}$, $\mathcal{D}\in T_{m,n}$, it can be obtained from (\ref{gx}) it follows
$$g(\mathcal{A}+\mathcal{D})\geq g(\mathcal{A})+g(\mathcal{D}),$$
which implies
\begin{align*}
g(\mathcal{A}) & =g((\mathcal{A}+\mathcal{D}) + (-\mathcal{D})) \\
   & \geq g(\mathcal{A}+\mathcal{D}) + g(-\mathcal{D}),
\end{align*}
then
\begin{align*}
|g(\mathcal{A}+\mathcal{D})-g(\mathcal{A})| &\leq \max \{|g(\mathcal{D})|, |g(-\mathcal{D})| \}\\
&=\max \{|g(\mathcal{D}_s)|, |g(-\mathcal{D}_s))|\}\\
&=\max \{|\lambda_{H~\min}(\mathcal{D}_s)|, |\lambda_{H~\max}(\mathcal{D}_s)|\}\\
&=\rho_H(\mathcal{D}_s).
\end{align*}

$(\romannumeral4)$ According $(\romannumeral3)$,
$$|g(\mathcal{A}+\mathcal{D})-g(\mathcal{A})| \leq\rho_H(\mathcal{D}_s)$$
for each $\mathcal{A}$, $\mathcal{D}\in T_{m,n}$. By Lemma \ref{Cth}, one can prove that $g$ is continuous in $T_{m,n}$.

$(\romannumeral5)$ One the one hand, for each interval tensor $\mathcal{A}^I$, clearly, as $\mathcal{A}^z\in\mathcal{A}^I$,
$$\min\{g(\mathcal{A}):\mathcal{A}\in\mathcal{A}^I\}\leq \min\{g(\mathcal{A}^z):z\in Y\}.$$

On the other hand, for each $\mathcal{A}\in \mathcal{A}^I$, $x=(x_1,x_2,\cdots,x_n)^{\top}\in\mathbb{R}^n$ and $i\in[n]$,
\begin{align*}
|x_i(\frac{\mathcal{A}x^{m-1}}{\sum\limits_{j=1}x_j^m})_i-x_i(\frac{\mathcal{A}^cx^{m-1}}{\sum\limits_{j=1}x_j^m})_i| &=|x_i(\frac{(\mathcal{A}-\mathcal{A}^c)x^{m-1}}{\sum\limits_{j=1}x_j^m})_i|\\
&\leq|x_i|(\frac{\Delta |x^{m-1}|}{\sum\limits_{j=1}x_j^m})_i.
\end{align*}
Let $z=\mbox{sgn}(x)=(\mbox{sgn}(x_1),\mbox{sgn}(x_2),\cdots,\mbox{sgn}(x_n))^{\top}$, where
$\mbox{sgn}(x_i)=\left
\{\begin{array}{ll}
1\mbox{ if }x_i\geq0,\\
-1\mbox{ if }x_i<0,
\end{array}
\right.$ then
\begin{align*}
x_i(\frac{\mathcal{A}x^{m-1}}{\sum\limits_{j=1}x_j^m})_i &\geq x_i(\frac{\mathcal{A}^cx^{m-1}}{\sum\limits_{j=1}x_j^m})_i-|x_i|(\frac{\Delta |x^{m-1}|}{\sum\limits_{j=1}x_j^m})_i\\
&=x_i\frac{\sum\limits_{i_2,\cdots,i_m=1}^na^c_{ii_2\cdots i_m}x_{i_2}\cdots x_{i_m}}{\sum\limits_{j=1}x_j^m}-|x_i|\frac{\sum\limits_{i_2,\cdots,i_m=1}^n\delta_{ii_2\cdots i_m}|x_{i_2}\cdots x_{i_m}|}{\sum\limits_{j=1}x_j^m}\\
&=\frac{\sum\limits_{i_2,\cdots,i_m=1}^na^c_{ii_2\cdots i_m}x_ix_{i_2}\cdots x_{i_m}}{\sum\limits_{j=1}x_j^m}-\frac{\sum\limits_{i_2,\cdots,i_m=1}^n\delta_{ii_2\cdots i_m}|x_ix_{i_2}\cdots x_{i_m}|}{\sum\limits_{j=1}x_j^m}\\
&=\frac{\sum\limits_{i_2,\cdots,i_m=1}^na^c_{ii_2\cdots i_m}x_ix_{i_2}\cdots x_{i_m}}{\sum\limits_{j=1}x_j^m}-\frac{\sum\limits_{i_2,\cdots,i_m=1}^n\delta_{ii_2\cdots i_m}\mbox{sgn}(x_ix_{i_2}\cdots x_{i_m})x_ix_{i_2}\cdots x_{i_m}}{\sum\limits_{j=1}x_j^m}\\
&=\frac{\sum\limits_{i_2,\cdots,i_m=1}^n[a^c_{ii_2\cdots i_m}-\delta_{ii_2\cdots i_m}\mbox{sgn}(x_ix_{i_2}\cdots x_{i_m})]x_ix_{i_2}\cdots x_{i_m}}{\sum\limits_{j=1}x_j^m}\\
&=\frac{\sum\limits_{i_2,\cdots,i_m=1}^na^z_{ii_2\cdots i_m}x_ix_{i_2}\cdots x_{i_m}}{\sum\limits_{j=1}x_j^m}\\
&=x_i(\frac{\mathcal{A}^z x^{m-1}}{\sum\limits_{j=1}x_j^m})_i.
\end{align*}

Hence,
$$\frac{\mathcal{A}x^m}{\sum\limits_{j=1}x_j^m}\geq\frac{\mathcal{A}^zx^m}{\sum\limits_{j=1}x_j^m}\geq\min\{g(\mathcal{A}^z);z\in Y\}$$
holds for each $\mathcal{A} \in \mathcal{A}^I$, which implies that
$$\min\{g(\mathcal{A}):\mathcal{A}\in\mathcal{A}^I\}\geq \min\{g(\mathcal{A}^z):z\in Y\},$$
the assertion follows.

$(\romannumeral6)$ For each $z \in Y$ denote by $\widehat{\mathcal{A}}^z=(\widehat{a}^z_{i_1i_2\cdots i_m})$ the tensor $\mathcal{A}^z$ for $ \mathcal{A}_s^I$, i.e.,
$$\widehat{\mathcal{A}}^z=\mathcal{A}_s^c-\Delta_s\times_1 T_z\times_2 T_z\times_3\cdots\times_m T_z,$$
where
\begin{align*}
\widehat{a}^z_{i_1i_2\cdots i_m}&=(a_s)^c_{i_1i_2\cdots i_m}-(\delta_s)_{i_1i_2\cdots i_m}\mbox{sgn}({z_{i_1}z_{i_2}\cdots z_{i_m}})\\
&=\frac{1}{m!}\sum_{\sigma \in S_m} a^c_{\sigma(i_1)\cdots \sigma(i_m)}-\frac{1}{m!}\sum_{\sigma \in S_m} \delta_{\sigma(i_1)\cdots \sigma(i_m)}\mbox{sgn}({z_{i_1}z_{i_2}\cdots z_{i_m}})\\
&=\frac{1}{m!}\sum_{\sigma \in S_m} [a^c_{\sigma(i_1)\cdots \sigma(i_m)}-\delta_{\sigma(i_1)\cdots \sigma(i_m)}\mbox{sgn}({z_{i_1}z_{i_2}\cdots z_{i_m}})]\\
&=\frac{1}{m!}\sum_{\sigma \in S_m} (a_s)^z_{\sigma(i_1)\cdots \sigma(i_m)}.
\end{align*}
Thus $\widehat{\mathcal{A}}^z=\mathcal{A}_s^z$ for each $z \in Y$, then employing $(\romannumeral1)$ we obtain
$$g(\mathcal{A}^z)=g(\mathcal{A}_s^z),$$
hence, the assertion $(\romannumeral5)$ implies that the minimum values of $g$ over $\mathcal{A}^I$ and $\mathcal{A}_s^I$ are equal.
\end{proof}

As a consequence of Theorem \ref{th1}, we obtain this characterization.

\begin{theorem}\label{th2}
Let $\mathcal{A}^I$ be an interval tensor in $T_{m,n}$. Then the following assertions are equivalent:
\begin{itemize}
  \item [$(a)$] $\mathcal{A}^I$ is positive (semi-)definite,
  \item [$(b)$] $\mathcal{A}^I_s$ is positive (semi-)definite,
  \item [$(c)$] $\mathcal{A}^z$ is positive (semi-)definite for each $z\in Y$.
\end{itemize}
\end{theorem}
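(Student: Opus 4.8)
The plan is to derive Theorem~\ref{th2} directly from the properties of the function $g$ collected in Theorem~\ref{th1}, reducing everything to a statement about the sign of the single quantity $\min\{g(\mathcal{A}):\mathcal{A}\in\mathcal{A}^I\}$. The key observation is that, by Definition~\ref{def1} together with property $(\romannumeral1)$ and $(\romannumeral2)$, positive (semi-)definiteness can be read off from $g$: a single tensor $\mathcal{A}$ is positive definite precisely when $g(\mathcal{A})>0$ and positive semi-definite precisely when $g(\mathcal{A})\geq0$, since $g(\mathcal{A})=g(\mathcal{A}_s)=\lambda_{H~\min}(\mathcal{A}_s)$ and $f_{\mathcal{A}}(x)=f_{\mathcal{A}_s}(x)=\mathcal{A}_s x^m$, and Lemma~\ref{HP} characterizes definiteness of the symmetric tensor $\mathcal{A}_s$ through its minimum H-eigenvalue. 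Thus for an entire interval tensor, $\mathcal{A}^I$ is positive (semi-)definite if and only if $g(\mathcal{A})>(\geq)0$ for every $\mathcal{A}\in\mathcal{A}^I$, which is equivalent to $\min\{g(\mathcal{A}):\mathcal{A}\in\mathcal{A}^I\}>(\geq)0$.

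First I would verify that this minimum is actually attained, so that the phrase ``$\min$'' used throughout Theorem~\ref{th1} is legitimate in the present context: $\mathcal{A}^I$ is compact in $T_{m,n}$ and, by property $(\romannumeral4)$, $g$ is continuous, so the extreme value theorem guarantees a minimizer. With the minimum well-defined, the equivalence $(a)\Leftrightarrow(b)$ follows immediately from property $(\romannumeral6)$, which asserts
$$\min\{g(\mathcal{A}):\mathcal{A}\in\mathcal{A}^I\}=\min\{g(\mathcal{A}):\mathcal{A}\in\mathcal{A}_s^I\};$$
since $\mathcal{A}^I$ is positive (semi-)definite iff the left-hand side is positive (nonnegative), and $\mathcal{A}_s^I$ is positive (semi-)definite iff the right-hand side is, the two conditions coincide. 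Similarly, the equivalence $(a)\Leftrightarrow(c)$ follows from property $(\romannumeral5)$,
$$\min\{g(\mathcal{A}):\mathcal{A}\in\mathcal{A}^I\}=\min\{g(\mathcal{A}^z):z\in Y\},$$
because the right-hand side is positive (nonnegative) exactly when $g(\mathcal{A}^z)>(\geq)0$ for every $z\in Y$, i.e.\ exactly when each $\mathcal{A}^z$ is positive (semi-)definite.

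The main subtlety I would be careful about is the translation between the analytic condition ``$g>0$'' (resp.\ ``$g\geq0$'') and the definitional condition ``positive (semi-)definite'' for individual tensors, since the definiteness in Definition~\ref{def1} is phrased for possibly asymmetric tensors while Lemma~\ref{HP} applies only to symmetric ones; this is exactly what properties $(\romannumeral1)$ and $(\romannumeral2)$ are needed to bridge, and I would state that bridging step explicitly rather than treat it as automatic. Everything else is a routine matter of noting that a finite minimum of real numbers is positive (nonnegative) if and only if each term is, so once Theorem~\ref{th1} is in hand there is essentially no further obstacle; the work has already been done in establishing $(\romannumeral5)$ and $(\romannumeral6)$, and Theorem~\ref{th2} is the clean corollary that packages those identities as statements about definiteness.
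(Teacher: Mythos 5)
Your proposal is correct and follows essentially the same route as the paper: both reduce positive (semi-)definiteness of $\mathcal{A}^I$ to the sign of $\min\{g(\mathcal{A}):\mathcal{A}\in\mathcal{A}^I\}$ via Lemma~\ref{HP}, then invoke properties $(\romannumeral6)$ and $(\romannumeral5)$ of Theorem~\ref{th1} for the equivalences $(a)\Leftrightarrow(b)$ and $(a)\Leftrightarrow(c)$ respectively. Your additional remarks on attainment of the minimum and on bridging asymmetric tensors to symmetric ones via $(\romannumeral1)$--$(\romannumeral2)$ are sound elaborations of steps the paper leaves implicit.
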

\begin{proof} By Lemma \ref{HP}, $\mathcal{A}^I$ is positive (semi-)definite if and only if
$$\min \{g(\mathcal{A}): \mathcal{A}\in \mathcal{A}^I \} >(\geq) 0$$
holds. Then by $(\romannumeral6)$ of Theorem \ref{th1}, $(a)$ and $(b)$ are equivalent. And from the assertion $(\romannumeral5)$ of Theorem \ref{th1}, $(a)$ and $(c)$ are equivalent.
\end{proof}

\vspace{.3cm}

\section{Stability of interval tensors}
\setcounter{section}{4}
A tensor $\mathcal{A}\in T_{m,n}$ is called Hurwitz stable if the real part its any E-eigenvalue Re$\lambda < 0$, in other words, all its eigenvalues lie in the open left half of the complex plane. An interval tensor $\mathcal{A}^I$ in $T_{m,n}$ is said to be Hurwitz stable if each $\mathcal{A}\in \mathcal{A}^I$ is Hurwitz stable. Bozorgmanesh, et al. showed that, for every E-eigenvalue $\lambda$ of tensor $\mathcal{A}\in T_{m,n}$ satisfies
\begin{equation}\label{Bth}
\lambda_{Z~\min}(\mathcal{A}_s)\leq \mbox{Re} \lambda\leq\lambda_{Z~\max}(\mathcal{A}_s),
\end{equation}
which is analogous to the well-known Bendixson's theorem in the matrix case \cite{bth,boz2020}. 
And by utilizing this, the relationship between Hurwitz stability and positive definiteness of interval tensor has been obtained.
\begin{lemma}\label{sta}\textup{\cite{boz2020}}
Let $\mathcal{A}^I=[\underline{\mathcal{A}},\overline{\mathcal{A}}]$ be symmetric interval tensor in $T_{m,n}$. $\mathcal{A}^I$ is Hurwitz stable if and only
if $-\mathcal{A}^I=[-\overline{\mathcal{A}},-\underline{\mathcal{A}}]$ is positive definite.
\end{lemma}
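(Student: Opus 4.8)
The plan is to collapse both properties onto a single scalar quantity, namely $\lambda_{Z~\max}(\mathcal{A}_s)$ as $\mathcal{A}$ ranges over $\mathcal{A}^I$, and then read off the equivalence from the Bendixson-type bound (\ref{Bth}). First I would record the two elementary facts coming from the definition of symmetrization: $\mathcal{A}x^m=\mathcal{A}_sx^m$ for all $x$, and $(-\mathcal{A})_s=-\mathcal{A}_s$ by linearity. Combining these with Definition \ref{def1} and Lemma \ref{HP}(b), for each fixed $\mathcal{A}\in\mathcal{A}^I$ one obtains the chain
\begin{align*}
-\mathcal{A}\ \text{positive definite}&\iff (-\mathcal{A}_s)x^m>0\ \ \forall x\neq0\\
&\iff \lambda_{Z~\min}(-\mathcal{A}_s)>0\\
&\iff \lambda_{Z~\max}(\mathcal{A}_s)<0,
\end{align*}
the last step using $\lambda_{Z~\min}(-\mathcal{A}_s)=-\lambda_{Z~\max}(\mathcal{A}_s)$. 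Quantifying over $\mathcal{A}\in\mathcal{A}^I$, this reduces the lemma to showing that ``$\lambda_{Z~\max}(\mathcal{A}_s)<0$ for every $\mathcal{A}\in\mathcal{A}^I$'' is equivalent to the Hurwitz stability of $\mathcal{A}^I$.

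For the direction ``$-\mathcal{A}^I$ positive definite $\Rightarrow$ $\mathcal{A}^I$ Hurwitz stable'' I would simply invoke (\ref{Bth}). Fix $\mathcal{A}\in\mathcal{A}^I$; the reduction gives $\lambda_{Z~\max}(\mathcal{A}_s)<0$, so every E-eigenvalue $\lambda$ of $\mathcal{A}$ satisfies $\mathrm{Re}\,\lambda\le\lambda_{Z~\max}(\mathcal{A}_s)<0$, whence $\mathcal{A}$ is Hurwitz stable; as $\mathcal{A}$ was arbitrary, so is $\mathcal{A}^I$. This direction only uses the upper inequality in (\ref{Bth}) and needs no symmetry assumption.

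The converse is the delicate part, and it is exactly where the hypothesis that $\mathcal{A}^I$ is symmetric is used. The obstacle is that (\ref{Bth}) bounds the E-eigenvalues of $\mathcal{A}$ in terms of the Z-eigenvalues of the \emph{different} tensor $\mathcal{A}_s$, so Hurwitz stability of $\mathcal{A}$ by itself does not visibly control $\lambda_{Z~\max}(\mathcal{A}_s)$. To circumvent this I would pass to the symmetrization: since $\mathcal{A}^I$ is symmetric we have $\mathcal{A}^I=\mathcal{A}^I_s$, and because $\mathcal{A}_s\in\mathcal{A}^I_s$ for any $\mathcal{A}\in\mathcal{A}^I$ (the Remark in Section~2), the symmetric tensor $\mathcal{A}_s$ itself belongs to $\mathcal{A}^I$. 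If $\mathcal{A}^I$ is Hurwitz stable, then in particular $\mathcal{A}_s$ is Hurwitz stable.

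It then remains to exploit that for a \emph{symmetric} tensor the upper Bendixson bound is tight. Concretely, $\lambda_{Z~\max}(\mathcal{A}_s)=\max_{\|x\|=1}\mathcal{A}_sx^m$ is attained on the unit sphere, and a maximizer is, by the first-order optimality condition, a Z-eigenvector; hence $\lambda_{Z~\max}(\mathcal{A}_s)$ is a genuine real E-eigenvalue of $\mathcal{A}_s$, so $\mathrm{Re}\,\lambda_{Z~\max}(\mathcal{A}_s)=\lambda_{Z~\max}(\mathcal{A}_s)<0$ by the stability of $\mathcal{A}_s$. By the reduction of the first paragraph this yields that $-\mathcal{A}$ is positive definite for every $\mathcal{A}\in\mathcal{A}^I$, i.e.\ $-\mathcal{A}^I$ is positive definite. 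The only genuine subtlety I anticipate is the justification that $\lambda_{Z~\max}(\mathcal{A}_s)$ is realized as an E-eigenvalue (that the defining supremum is attained), which follows from compactness of the unit sphere together with the variational characterization of extreme Z-eigenvalues underlying Lemma \ref{HP}.
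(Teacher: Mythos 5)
Your proof is correct, and it follows the same route the paper itself indicates: Lemma \ref{sta} is imported from \cite{boz2020} without proof, but the text introduces the Bendixson-type bound \eqref{Bth} immediately beforehand precisely as the tool behind it, which is exactly what you use. Your two supporting observations --- that for a symmetric interval tensor $\mathcal{A}_s\in\mathcal{A}^I_s=\mathcal{A}^I$ so stability transfers to the symmetrization, and that $\lambda_{Z~\max}(\mathcal{A}_s)$ is attained on the unit sphere and is therefore itself an E-eigenvalue of $\mathcal{A}_s$ --- are the right places to be careful, and both are justified correctly.
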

Based on this lemma, similar to (c) of Theorem \ref{th2}, we can derive another equivalent condition for Hurwitz stability of interval tensors. This equivalent condition merely requires verifying the stability of a finite number of tensors.

Unlike the previous section, where we used the tensors $\mathcal{A}^z=\mathcal{A}^c-\Delta\times_1 T_z\times_2 T_z\times_3\cdots\times_m T_z$, in the present section, we will characterize Hurwitz stability by means of tensors
$$\widetilde{\mathcal{A}}^z=(\widetilde{a}^z_{i_1i_2\cdots i_m})=\mathcal{A}^c+\Delta\times_1 T_z\times_2 T_z\times_3\cdots\times_m T_z,~~~~z\in Y.$$
 It is evident that $\widetilde{\mathcal{A}}^z$ belongs to $\mathcal{A}^I$, and all $\widetilde{\mathcal{A}}^z$ tensors are symmetric when the interval tensor $\mathcal{A}^I$ itself is symmetric.
\begin{theorem}
Let $\mathcal{A}^I=[\underline{\mathcal{A}},\overline{\mathcal{A}}]$ be a symmetric interval tensor in $T_{m,n}$. Then $\mathcal{A}^I$ is Hurwitz stable if and only if for each $z\in Y$
$\widetilde{\mathcal{A}}^z$ is Hurwitz stable .
\end{theorem}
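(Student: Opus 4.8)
The plan is to reduce the stability statement entirely to the positive-definiteness characterization already established in Theorem \ref{th2}, using Lemma \ref{sta} twice — once at the interval level and once at the single-tensor level — to pass back and forth between Hurwitz stability and positive definiteness.

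First I would apply Lemma \ref{sta} to the symmetric interval tensor $\mathcal{A}^I$ itself: it is Hurwitz stable if and only if $-\mathcal{A}^I=[-\overline{\mathcal{A}},-\underline{\mathcal{A}}]$ is positive definite. The point to record here is the center--radius decomposition of $-\mathcal{A}^I$. Negation swaps the two endpoints but leaves their half-difference unchanged, so the center of $-\mathcal{A}^I$ is $-\mathcal{A}^c$ while its radius tensor is still $\Delta$; that is, $-\mathcal{A}^I=[(-\mathcal{A}^c)-\Delta,(-\mathcal{A}^c)+\Delta]$. Then I would invoke Theorem \ref{th2}(c) for the interval tensor $-\mathcal{A}^I$: it is positive definite if and only if each of its auxiliary tensors is positive definite. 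Computing that auxiliary tensor directly from the center $-\mathcal{A}^c$ and radius $\Delta$ gives
$$(-\mathcal{A})^z=-\mathcal{A}^c-\Delta\times_1 T_z\times_2 T_z\times_3\cdots\times_m T_z=-\left(\mathcal{A}^c+\Delta\times_1 T_z\times_2 T_z\times_3\cdots\times_m T_z\right)=-\widetilde{\mathcal{A}}^z,$$
so $-\mathcal{A}^I$ is positive definite if and only if $-\widetilde{\mathcal{A}}^z$ is positive definite for every $z\in Y$.

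Finally I would close the chain at the level of individual tensors. Since $\mathcal{A}^I$ is symmetric, each $\widetilde{\mathcal{A}}^z$ is symmetric, so the degenerate interval tensor $[\widetilde{\mathcal{A}}^z,\widetilde{\mathcal{A}}^z]$ is a symmetric interval tensor and Lemma \ref{sta} applies to it directly, giving that $\widetilde{\mathcal{A}}^z$ is Hurwitz stable if and only if $-\widetilde{\mathcal{A}}^z$ is positive definite. Concatenating the three equivalences — Lemma \ref{sta} for $\mathcal{A}^I$, Theorem \ref{th2}(c) for $-\mathcal{A}^I$, and Lemma \ref{sta} for the point interval $[\widetilde{\mathcal{A}}^z,\widetilde{\mathcal{A}}^z]$ — yields exactly the asserted equivalence between Hurwitz stability of $\mathcal{A}^I$ and Hurwitz stability of all the $\widetilde{\mathcal{A}}^z$.

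The main obstacle, and really the only step requiring care, is the bookkeeping in the middle: I must verify that negating an interval tensor preserves $\Delta$ and only flips the center, and then confirm that the auxiliary construction $\mathcal{A}^c-\Delta\times_1 T_z\cdots$ applied to $-\mathcal{A}^I$ produces precisely $-\widetilde{\mathcal{A}}^z$ (note the sign on $\Delta$ flips relative to the definition of $\widetilde{\mathcal{A}}^z$, which is why the tilde tensors rather than the $\mathcal{A}^z$ tensors appear). Once the identity $(-\mathcal{A})^z=-\widetilde{\mathcal{A}}^z$ is pinned down, everything else is a mechanical application of results already in hand; in particular the single-tensor equivalence in the last step introduces no new eigenvalue estimates, since it is just Lemma \ref{sta} specialized to a point interval (equivalently, it follows from Lemma \ref{HP}(b) together with $\lambda_{Z~\min}(-\widetilde{\mathcal{A}}^z)=-\lambda_{Z~\max}(\widetilde{\mathcal{A}}^z)$ and the Bendixson-type bound (\ref{Bth})).
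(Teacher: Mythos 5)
Your proposal is correct and follows essentially the same route as the paper: both hinge on the identity that the auxiliary tensor of $-\mathcal{A}^I$ at $z$ is exactly $-\widetilde{\mathcal{A}}^z$, then invoke Theorem \ref{th2}(c) for $-\mathcal{A}^I$ and Lemma \ref{sta} to return to Hurwitz stability of $\mathcal{A}^I$. The only cosmetic differences are that you organize the argument as a single chain of equivalences (the paper treats necessity separately via $\widetilde{\mathcal{A}}^z\in\mathcal{A}^I$) and that you phrase the single-tensor step as Lemma \ref{sta} applied to a point interval, where the paper argues directly with Z-eigenvalues and Lemma \ref{HP} — which, as you note, amounts to the same thing.
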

\begin{proof}
The necessity is obvious because $\widetilde{\mathcal{A}}^z\in \mathcal{A}^I$ for each $z\in Y$.

To prove the sufficiency, let $z\in Y$. Because $\widetilde{\mathcal{A}}^z$ is Hurwitz stable and symmetric, all Z-eigenvalues of $\widetilde{\mathcal{A}}^z$ are negative, therefore, all Z-eigenvalues of the symmetric tensor
$$-\widetilde{\mathcal{A}}^z =-\mathcal{A}^c- \Delta\times_1 T_z\times_2 T_z\times_3\cdots\times_m T_z$$
are positive, and according Lemma \ref{HP} $-\widetilde{\mathcal{A}}^z$ is positive definite. However, $-\widetilde{\mathcal{A}}^z$ corresponds exactly to the tensor $ \mathcal{A}^z $ associated with the interval tensor $ -\mathcal{A}^I $. Therefore, on the basis of assertion (c) of Theorem \ref{th2}, $-\mathcal{A}^I$ is positive definite. By virtue of Lemma \ref{sta}, it further follows that $ \mathcal{A}^I $ is Hurwitz stable.
\end{proof}

Theorem \ref{th2} established that the positive (semi-)definiteness of a general interval tensor can be equivalently characterized via its associated symmetric interval tensor  $\mathcal{A}_s^I$. Regrettably, this favorable property does not extend to Hurwitz stability here, only one direction of the implication holds true.

\begin{theorem}
If $\mathcal{A}_s^I$ is Hurwitz stable, then $\mathcal{A}^I$ is also Hurwitz stable.
\end{theorem}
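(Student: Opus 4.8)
The plan is to fix an arbitrary member $\mathcal{A}\in\mathcal{A}^I$ together with an arbitrary E-eigenvalue $\lambda$ of $\mathcal{A}$ and to show directly that $\mathrm{Re}\,\lambda<0$; since both $\mathcal{A}$ and $\lambda$ are arbitrary, this establishes that every tensor in $\mathcal{A}^I$ is Hurwitz stable. The bridge between the symmetric data, where the hypothesis lives, and the real parts of the E-eigenvalues of the possibly asymmetric $\mathcal{A}$ is the Bendixson-type inequality (\ref{Bth}), which gives $\mathrm{Re}\,\lambda\le\lambda_{Z~\max}(\mathcal{A}_s)$. Thus the whole argument reduces to producing the strict bound $\lambda_{Z~\max}(\mathcal{A}_s)<0$.

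To obtain this bound I would unwind the hypothesis step by step. Since $\mathcal{A}_s^I$ is symmetric, Lemma \ref{sta} converts its Hurwitz stability into the positive definiteness of $-\mathcal{A}_s^I$. Next, $\mathcal{A}\in\mathcal{A}^I$ forces $\mathcal{A}_s\in\mathcal{A}_s^I$ (the symmetrization of any member lands in the symmetrized interval tensor), hence $-\mathcal{A}_s\in-\mathcal{A}_s^I$, and the positive definiteness of the entire interval tensor $-\mathcal{A}_s^I$ yields in particular that the single symmetric tensor $-\mathcal{A}_s$ is positive definite. By Lemma \ref{HP} this is equivalent to $\lambda_{Z~\min}(-\mathcal{A}_s)>0$, and invoking the identity $\lambda_{Z~\min}(-\mathcal{A}_s)=-\lambda_{Z~\max}(\mathcal{A}_s)$ recorded earlier gives exactly $\lambda_{Z~\max}(\mathcal{A}_s)<0$.

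Chaining the two estimates, every E-eigenvalue $\lambda$ of every $\mathcal{A}\in\mathcal{A}^I$ satisfies $\mathrm{Re}\,\lambda\le\lambda_{Z~\max}(\mathcal{A}_s)<0$, so each such $\mathcal{A}$ is Hurwitz stable and the conclusion follows. I do not expect a serious computational obstacle; the proof is a short assembly of facts already proven, and the effort lies entirely in the correct bookkeeping. The one genuine subtlety, and the reason only this single implication holds as the preceding paragraph warns, is that the inequality (\ref{Bth}) controls $\mathrm{Re}\,\lambda$ only from above by a quantity attached to the symmetrization $\mathcal{A}_s$: passing from $\mathcal{A}$ to $\mathcal{A}_s$ discards information, so there is no route back from the stability of $\mathcal{A}^I$ to that of $\mathcal{A}_s^I$, and the converse genuinely fails. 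The point I would be most careful about is therefore using the correct ($\le$) direction of (\ref{Bth}) and the sign conversion that turns positive definiteness of $-\mathcal{A}_s$ into a strict upper bound on $\lambda_{Z~\max}(\mathcal{A}_s)$.
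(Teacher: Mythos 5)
Your proposal is correct and follows essentially the same route as the paper: both arguments hinge on applying the Bendixson-type inequality (\ref{Bth}) to an arbitrary E-eigenvalue of $\mathcal{A}\in\mathcal{A}^I$ and then observing that $\mathcal{A}_s\in\mathcal{A}_s^I$ forces $\lambda_{Z~\max}(\mathcal{A}_s)<0$. The only (harmless) difference is cosmetic: you justify that last bound by detouring through Lemma \ref{sta} and Lemma \ref{HP} (positive definiteness of $-\mathcal{A}_s$), whereas the paper reads it off directly from the Hurwitz stability of the member $\mathcal{A}_s$ of $\mathcal{A}_s^I$.
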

\begin{proof}
Let $\lambda$ be an E-eigenvalue of a tensor $\mathcal{A}\in \mathcal{A}^I$. Then by the inequation \eqref{Bth} we have Re$\lambda\leq \lambda_{Z~\max}(\mathcal{A}_s) < 0$ because the symmetric tensor $\mathcal{A}_s$ belongs to $\mathcal{A}^I_s$ and thus has all E-eigenvalues negative. This proves that $\mathcal{A}^I$ is Hurwitz stable.
\end{proof}

The converse does not hold, and one may consider the example given in \cite{rohn1994} regarding interval matrix, i.e., the case where  $m=2$.

\vspace{.3cm}

\section{Some positive definite $4$th order $3$-dimensional interval tensors}
\setcounter{section}{5}
Let $\underline{\mathcal{A}}=(\underline{a}_{ijkl}), \overline{\mathcal{A}}=(\overline{a}_{ijkl})\in S_{4,3}$, and ${\mathcal{A}}^I=[\underline{\mathcal{A}},\overline{\mathcal{A}]}$, where $i,j,k,l\in[3]$. In this section we only to consider the symmetric interval tensor ${\mathcal{A}}^I$. From the results of the previous section, it follows that to prove the positive (semi-)definiteness of the $4$th order $3$-dimensional interval tensor  ${\mathcal{A}}^I$, it suffices to verify the positive (semi-)definiteness of four specific tensors $\mathcal{A}^{z_1}$, $\mathcal{A}^{z_2}$, $\mathcal{A}^{z_3}$ and $\mathcal{A}^{z_4}$, where $z_1=(1,1,1)^{\top},z_2=(1,1,-1)^{\top},z_3=(1,-1,1)^{\top},z_4=(-1,1,1)^{\top}\in Y=\{z\in \mathbb{R}^3; |z_j|=1 \mbox{ for } j=1,2,3\}$. By leveraging this insight, we have derived several sufficient conditions for the positive (semi-)definiteness of $4$th order $3$-dimensional interval tensors in this section.
Furthermore, we can further obtain some sufficient conditions for the positive (semi-)definiteness of ${\mathcal{A}}=(a_{i_1i_2i_3i_4})\in S_{4,3}$.

For tensor ${\mathcal{A}}=(a_{ijkl})\in S_{4,3}$, its corresponding homogeneous polynomial is
\begin{align*}
  f_{\mathcal{A}}(x) =& a_{1111}x_1^4+a_{2222}x_2^4+a_{3333}x_3^4+4(a_{1112}x_1^3x_2+a_{1222}x_1x_2^3+a_{2223}x_2^3x_3+a_{2333}x_2x_3^3\\
  &+a_{1113}x_1^3x_3+a_{2333}x_2x_3^3)+12(a_{1123}x_1^2x_2x_3+a_{1223}x_1x_2^2x_3+a_{1233}x_1x_2x_3^2)\\
  &+6(a_{1122}x_1^2x_2^2+a_{1133}x_1^2x_3^2+a_{2233}x_2^2x_3^2)
\end{align*}
where $x=(x_1,x_2,x_3)^{\top}\in\mathbb{R}^3$. In the subsequent discussion, we only need to focus on the scenario where $\overline{a}_{iiii}=\underline{a}_{iiii}$ and $\overline{a}_{iijj}=\underline{a}_{iijj}$ i.e., $\delta_{iiii}=\delta_{iijj}=0$ for all $i,j\in[3]$, $i\neq j$. This is because the entries in question here all correspond to the coefficients of the square terms.

\begin{theorem}\label{th3.1}
For a $4$th order $3$-dimensional real interval tensor ${\mathcal{A}}^I=[\underline{\mathcal{A}},\overline{\mathcal{A}]}$, where  $\underline{a}_{1111}=\underline{a}_{2222}=\underline{a}_{3333}=0$. Then ${\mathcal{A}}^I$ is positive semi-definite if $\underline{a}_{iiij}=\overline{a}_{iiij}=0$ for all $i,j\in[3]$, $i\neq j$, $\underline{a}_{1123}=-1$, $\overline{a}_{1123}=1$, $\underline{a}_{1223}=\underline{a}_{1233}=\overline{a}_{1223}=\overline{a}_{1233}=0$, $\underline{a}_{1122},\underline{a}_{1133}\geq1$ and $\underline{a}_{2233}\geq0$.
\end{theorem}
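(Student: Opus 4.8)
The plan is to invoke the reduction already stated at the start of this section: by Theorem~\ref{th2}(c), the positive semi-definiteness of the symmetric interval tensor $\mathcal{A}^I$ is equivalent to that of the four vertex tensors $\mathcal{A}^{z_1},\mathcal{A}^{z_2},\mathcal{A}^{z_3},\mathcal{A}^{z_4}$, so it suffices to show that each of their associated quartic forms is nonnegative on $\mathbb{R}^3$. First I would read off the relevant entries of each $\mathcal{A}^{z_i}$ from the defining formula $a^z_{i_1i_2i_3i_4}=a^c_{i_1i_2i_3i_4}-\delta_{i_1i_2i_3i_4}\,\mathrm{sgn}(z_{i_1}z_{i_2}z_{i_3}z_{i_4})$. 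Under the hypotheses, all diagonal entries $a_{iiii}$ and all entries $a_{iiij}$ vanish, while $a^c_{1123}=0$ with $\delta_{1123}=1$, and $a^c_{1223}=a^c_{1233}=0$ with zero radius; the only entries feeding the form are thus $a_{1123}$, $a_{1122}$, $a_{1133}$ and $a_{2233}$, and among these only $a_{1123}$ varies across the vertices.

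Next I would evaluate the sign factor attached to the index pattern $(1,1,2,3)$, namely $z_1^2z_2z_3=z_2z_3$, for each of the four choices of $z$. This gives $a^{z_1}_{1123}=a^{z_4}_{1123}=-1$ and $a^{z_2}_{1123}=a^{z_3}_{1123}=+1$, the remaining nonzero entries being unchanged. Substituting into the quartic $f_{\mathcal{A}}$ displayed above collapses it to
$$f_{\mathcal{A}^{z_i}}(x)=6\bigl(a_{1122}x_1^2x_2^2+a_{1133}x_1^2x_3^2+2a_{1123}x_1^2x_2x_3+a_{2233}x_2^2x_3^2\bigr),$$
with $a_{1123}=\pm1$, $a_{1122},a_{1133}\geq1$ and $a_{2233}\geq0$.

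The final step is a sum-of-squares estimate. Using the bounds $a_{1122}x_1^2x_2^2\geq x_1^2x_2^2$, $a_{1133}x_1^2x_3^2\geq x_1^2x_3^2$ and $a_{2233}x_2^2x_3^2\geq0$ (all legitimate because the discarded factors are nonnegative), I would obtain
$$f_{\mathcal{A}^{z_i}}(x)\geq 6\,x_1^2\bigl(x_2^2+2a_{1123}x_2x_3+x_3^2\bigr)=6\,x_1^2\bigl(x_2+a_{1123}x_3\bigr)^2\geq0,$$
where the middle equality uses $a_{1123}^2=1$. Hence each $\mathcal{A}^{z_i}$ is positive semi-definite, and by the reduction above so is $\mathcal{A}^I$.

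I expect the nonnegativity itself to be immediate; the main obstacle is purely the bookkeeping, namely tracking the sign factors $\mathrm{sgn}(z_{i_1}z_{i_2}z_{i_3}z_{i_4})$ together with the index multiplicities introduced by symmetrization, so that the collapsed form is written correctly. The conceptual content, worth emphasizing, is that the interval radius bound $|a_{1123}|\leq1$ is exactly calibrated against the lower bounds $a_{1122},a_{1133}\geq1$: this is precisely the discriminant condition that lets the cross term be absorbed by completing the square, and it explains why these particular numerical thresholds appear in the statement.
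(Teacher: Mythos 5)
Your proof is correct and follows essentially the same route as the paper: reduce to the four vertex tensors $\mathcal{A}^{z_1},\dots,\mathcal{A}^{z_4}$ via Theorem~\ref{th2}(c), track the sign $z_2z_3$ on the $(1,1,2,3)$ entry, and exhibit each quartic form as (bounded below by) a sum of squares. The only difference is cosmetic: the paper pins the coefficients at the extremal values $a_{1122}=a_{1133}=1$, $a_{2233}=0$ and writes the forms as exact squares $6(x_1x_2\pm x_1x_3)^2$, whereas you keep the general coefficients and absorb the slack explicitly, which is if anything slightly more complete.
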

\begin{proof}
Let $\overline{a}_{iiii}=\underline{a}_{iiii}=0$ for all $i\in[3]$, $\overline{a}_{1122}=\overline{a}_{1133}=\underline{a}_{1122}=\underline{a}_{1133}=1$ and $\overline{a}_{2233}=\underline{a}_{2233}=0$. Then $a^c_{iiii}=a^c_{iiij}=0$ for all $i,j\in[3]$, $i\neq j$, $a^c_{1122}=a^c_{1133}=1$, $a^c_{2233}=a^c_{1123}=a^c_{1223}=a^c_{1233}=0$, $\delta_{iiii}=\delta_{iijj}=\delta_{iiij}=\delta_{1233}=\delta_{1123}=0$ for all $i,j\in[3]$ and $\delta_{1123}=1$.

Since for $x=(x_1,x_2,x_3)^{\top}\in\mathbb{R}^3\setminus\{0\}$
\begin{align*}
  f_{\mathcal{A}^{z_1}}(x)=f_{\mathcal{A}^{z_4}}(x)&=-12x_1^2x_2x_3+6(x_1^2x_2^2+x_1^2x_3^2)\\
  &=6(x_1x_2-x_1x_3)^2\\
  &\geq0,
\end{align*}
\begin{align*}
  f_{\mathcal{A}^{z_2}}(x)=f_{\mathcal{A}^{z_3}}(x)&=12x_1^2x_2x_3+6(x_1^2x_2^2+x_1^2x_3^2)\\
  &=6(x_1x_2+x_1x_3)^2\\
  &\geq0,
\end{align*}
$\mathcal{A}^z$ is positive semi-definite for each $z\in Y$. According Theorem \ref{th2}, $\mathcal{A}^I$ is positive semi-definite.
\end{proof}


\begin{corollary}\label{cor2}
Let ${\mathcal{A}}=(a_{ijkl})\in S_{4,3}$ and $a_{iiii}\geq0$, for all $i\in\{1,2,3\}$. Then $\mathcal{A}$ is positive semi-definite if $a_{iiij}=a_{1223}=a_{1233}=0$, $a_{iijj}\geq0$ for all $i,j\in\{1,2,3\}$, $i\neq j$, $a_{1122}\geq |a_{1123}|$ and $a_{1133}\geq |a_{1123}|$. Moreover, $a_{iiii}>0$, for all $i\in\{1,2,3\}$, then $\mathcal{A}$ is positive definite if $a_{iiij}=a_{1223}=a_{1233}=0$, $a_{iijj}\geq0$ for all $i,j\in\{1,2,3\}$, $i\neq j$, $a_{1122}\geq |a_{1123}|$ and $a_{1133}\geq |a_{1123}|$.
\end{corollary}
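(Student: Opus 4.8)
The plan is to work directly with the homogeneous quartic $f_{\mathcal{A}}(x)$ attached to $\mathcal{A}$, exactly as in the proof of Theorem \ref{th3.1}, since the corollary concerns a single symmetric tensor rather than a genuine interval. First I would substitute the hypotheses into the general expression for $f_{\mathcal{A}}(x)$ recorded in this section: the vanishing of every $a_{iiij}$ kills all the cubic--linear monomials (the ``$4(\cdots)$'' block), while $a_{1223}=a_{1233}=0$ removes two of the three degree-$(1,1,1,1)$ mixed terms, leaving
\begin{equation*}
f_{\mathcal{A}}(x)=a_{1111}x_1^4+a_{2222}x_2^4+a_{3333}x_3^4+12a_{1123}x_1^2x_2x_3+6\bigl(a_{1122}x_1^2x_2^2+a_{1133}x_1^2x_3^2+a_{2233}x_2^2x_3^2\bigr).
\end{equation*}

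The only sign-indefinite contribution is $12a_{1123}x_1^2x_2x_3$, and controlling it is the crux. The key step is to absorb it into the two square-product terms $6a_{1122}x_1^2x_2^2$ and $6a_{1133}x_1^2x_3^2$. Using $2|x_2x_3|\le x_2^2+x_3^2$ (equivalently, completing the square into $6|a_{1123}|(x_1x_2\pm x_1x_3)^2$, the same perfect-square device used in Theorem \ref{th3.1}), I obtain
\begin{equation*}
12a_{1123}x_1^2x_2x_3+6a_{1122}x_1^2x_2^2+6a_{1133}x_1^2x_3^2\ge 6\bigl(a_{1122}-|a_{1123}|\bigr)x_1^2x_2^2+6\bigl(a_{1133}-|a_{1123}|\bigr)x_1^2x_3^2.
\end{equation*}
Here the hypotheses $a_{1122}\ge|a_{1123}|$ and $a_{1133}\ge|a_{1123}|$ are precisely what makes both coefficients nonnegative.

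Combining this with $a_{iiii}\ge 0$ and $a_{2233}\ge 0$ shows every surviving monomial has a nonnegative coefficient multiplying a manifestly nonnegative power product, whence $f_{\mathcal{A}}(x)\ge 0$ for all $x$, i.e.\ $\mathcal{A}$ is positive semi-definite. For the positive-definite statement I would strengthen the diagonal: when $a_{iiii}>0$ for every $i$, the pure-power part $a_{1111}x_1^4+a_{2222}x_2^4+a_{3333}x_3^4$ is strictly positive whenever $x\ne 0$ (some coordinate is nonzero), while all remaining terms stay $\ge 0$ by the same bound, so $f_{\mathcal{A}}(x)>0$ on $\mathbb{R}^3\setminus\{0\}$.

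The main obstacle is the indefinite mixed term $12a_{1123}x_1^2x_2x_3$: one must recognize that the conditions on $a_{1122}$ and $a_{1133}$ are exactly enough to dominate it, equivalently that the $2\times2$ form $\bigl(\begin{smallmatrix}a_{1122}&a_{1123}\\a_{1123}&a_{1133}\end{smallmatrix}\bigr)$ is positive semi-definite because $a_{1122}a_{1133}\ge a_{1123}^2$. Everything else is bookkeeping on signs. An alternative, fully in the spirit of this section, would be to embed $\mathcal{A}$ in a suitable symmetric interval tensor and invoke assertion $(c)$ of Theorem \ref{th2}, but the direct quartic estimate above is shorter and self-contained.
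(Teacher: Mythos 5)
Your proof is correct and rests on exactly the decomposition that underlies the paper's Theorem \ref{th3.1}: the bound
$12a_{1123}x_1^2x_2x_3+6a_{1122}x_1^2x_2^2+6a_{1133}x_1^2x_3^2\ge 6\bigl(a_{1122}-|a_{1123}|\bigr)x_1^2x_2^2+6\bigl(a_{1133}-|a_{1123}|\bigr)x_1^2x_3^2$ is precisely the perfect-square identity $6|a_{1123}|(x_1x_2\pm x_1x_3)^2\ge 0$ used there, and your handling of the strict case via the positive diagonal terms is sound. The only (cosmetic) difference is that you argue directly on the quartic for the single tensor $\mathcal{A}$ instead of normalizing by $|a_{1123}|$, embedding $\mathcal{A}$ into the interval tensor of Theorem \ref{th3.1}, and invoking Theorem \ref{th2}, which is the route the paper's placement of the corollary suggests; both yield the same computation.
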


\begin{theorem}\label{th3.2}
For a $4$th order $3$-dimensional real interval tensor ${\mathcal{A}}^I=[\underline{\mathcal{A}},\overline{\mathcal{A}]}$, where  $\underline{a}_{1111}=\underline{a}_{2222}=0$ and $\underline{a}_{3333}=1$. Then ${\mathcal{A}}^I$ is positive semi-definite if $\underline{a}_{1112}=\underline{a}_{1113}=\underline{a}_{1222}=\underline{a}_{2223}=\underline{a}_{1333}=\overline{a}_{1112}=\overline{a}_{1113}=\overline{a}_{1222}=\overline{a}_{2223}=\overline{a}_{1333}=0$, $\underline{a}_{2333}=-1$, $\overline{a}_{2333}=1$ and one of the following conditions is satisfied.
\begin{itemize}
  \item [(a)]  $\underline{a}_{iijk}=\overline{a}_{iijk}=0$ for all $i,j,k\in[3]$, $i\neq j$, $i\neq k$, $j\neq k$, $\underline{a}_{1122},\underline{a}_{1133}\geq0$ and $\underline{a}_{2233}\geq\frac{2}{3}$.
  \item [(b)] $\underline{a}_{1123}=-1$, $\overline{a}_{1123}=1$, $\underline{a}_{1223}=\underline{a}_{1233}=\overline{a}_{1223}=\overline{a}_{1233}=0$, and $\underline{a}_{1122},\underline{a}_{1133}\geq1$ and $\underline{a}_{2233}\geq\frac{2}{3}$.
\end{itemize}
\end{theorem}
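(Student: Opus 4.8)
The plan is to argue exactly as in Theorem~\ref{th3.1}: use assertion $(c)$ of Theorem~\ref{th2} to replace positive semi-definiteness of the whole interval tensor by that of the four vertex tensors $\mathcal{A}^{z_1},\mathcal{A}^{z_2},\mathcal{A}^{z_3},\mathcal{A}^{z_4}$, and then certify each associated quartic form as a sum of squares. First I would read off $\mathcal{A}^c$ and $\Delta$ from the hypotheses. In both cases the diagonal and $iijj$ coefficients are fixed, so the only nonzero entries of $\mathcal{A}^c$ are $a^c_{3333}=1$ together with $a^c_{1122}=\underline{a}_{1122}$, $a^c_{1133}=\underline{a}_{1133}$ and $a^c_{2233}=\underline{a}_{2233}$; the radius tensor $\Delta$ has $\delta_{2333}=1$ in case $(a)$, and additionally $\delta_{1123}=1$ in case $(b)$, all other entries vanishing.

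Next I would determine the signs attached to the uncertain entries by the construction $a^z_{i_1i_2i_3i_4}=a^c_{i_1i_2i_3i_4}-\delta_{i_1i_2i_3i_4}\,\mathrm{sgn}(z_{i_1}z_{i_2}z_{i_3}z_{i_4})$. Since $z_j^2=1$, both relevant entries obey $\mathrm{sgn}(z_2z_3^3)=\mathrm{sgn}(z_1^2z_2z_3)=\mathrm{sgn}(z_2z_3)$, so $a^z_{2333}$ and $a^z_{1123}$ equal $-1$ for $z\in\{z_1,z_4\}$ and $+1$ for $z\in\{z_2,z_3\}$. Consequently the four forms collapse into two, $f_{\mathcal{A}^{z_1}}=f_{\mathcal{A}^{z_4}}$ and $f_{\mathcal{A}^{z_2}}=f_{\mathcal{A}^{z_3}}$, and I need only certify one form per sign.

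The core of the proof is the sum-of-squares decomposition, written with the upper sign for $z\in\{z_1,z_4\}$ and the lower sign for $z\in\{z_2,z_3\}$. In case $(a)$ the form equals $x_3^4\mp 4x_2x_3^3+6\underline{a}_{2233}x_2^2x_3^2+6\underline{a}_{1122}x_1^2x_2^2+6\underline{a}_{1133}x_1^2x_3^2$, and I would split off the first three terms as $(x_3^2\mp 2x_2x_3)^2+6(\underline{a}_{2233}-\tfrac23)x_2^2x_3^2$; this displays the threshold $\underline{a}_{2233}\ge\tfrac23$ as exactly the discriminant condition guaranteeing nonnegativity, while $\underline{a}_{1122},\underline{a}_{1133}\ge0$ handle the last two terms. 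In case $(b)$ the extra monomial $\mp 12x_1^2x_2x_3$ appears, and I would absorb it through $6\underline{a}_{1122}x_1^2x_2^2+6\underline{a}_{1133}x_1^2x_3^2\mp 12x_1^2x_2x_3=6x_1^2(x_2\mp x_3)^2+6(\underline{a}_{1122}-1)x_1^2x_2^2+6(\underline{a}_{1133}-1)x_1^2x_3^2$, which is precisely why the hypotheses strengthen to $\underline{a}_{1122},\underline{a}_{1133}\ge1$. Combining the pieces gives $f_{\mathcal{A}^z}\ge0$ for every $z$, whence Theorem~\ref{th2} yields the positive semi-definiteness of $\mathcal{A}^I$.

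The main obstacle is not any single estimate but finding the right groupings, which in turn pin down the numerical thresholds: the constant $\tfrac23$ is forced by the discriminant of $x_3^2\mp 4x_2x_3+6\underline{a}_{2233}x_2^2$, and the upgrade from $\ge0$ to $\ge1$ on $a_{1122},a_{1133}$ between cases $(a)$ and $(b)$ is exactly the cost of cancelling the cross term $\mp12x_1^2x_2x_3$ against $6x_1^2(x_2\mp x_3)^2$. Once these decompositions are identified, the remaining verification is routine algebra.
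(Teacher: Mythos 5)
Your proof is correct and follows essentially the same route as the paper: reduce to the four vertex tensors $\mathcal{A}^{z_1},\dots,\mathcal{A}^{z_4}$ via Theorem~\ref{th2}(c), note that they collapse to two forms according to $\mathrm{sgn}(z_2z_3)$, and exhibit each quartic as a sum of squares. If anything, your version is slightly more complete: you keep $\underline{a}_{1122},\underline{a}_{1133},\underline{a}_{2233}$ general and absorb the slack into explicit nonnegative remainder terms such as $6(\underline{a}_{2233}-\tfrac{2}{3})x_2^2x_3^2$, whereas the paper only verifies the boundary values ($0$ or $1$, and $\tfrac{2}{3}$) and leaves the monotonicity step implicit.
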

\begin{proof}
Let $\overline{a}_{1111}=\underline{a}_{1111}=\overline{a}_{2222}=\underline{a}_{2222}=0$, and $\overline{a}_{3333}=\underline{a}_{3333}=1$.

(a) Take $\overline{a}_{1122}=\overline{a}_{1133}=\underline{a}_{1122}=\underline{a}_{1133}=0$ and $\overline{a}_{2233}=\underline{a}_{2233}=\frac{2}{3}$. Then $a^c_{1111}=a^c_{2222}=a^c_{1122}=a^c_{1133}=a^c_{iiij}=a^c_{iijk}=0$, for all $i,j,k\in[3]$, $i\neq j$, $i\neq k$, $j\neq k$, $a^c_{3333}=1$, $a^c_{2233}=\frac{2}{3}$, $\delta_{iiii}=\delta_{iijj}=\delta_{iijk}=\delta_{1112}=\delta_{1222}=\delta_{2223}=\delta_{1333}=\delta_{1113}=0$ for all $i,j,k\in[3]$, $i\neq j$, $i\neq k$, $j\neq k$ and $\delta_{2333}=1$.

Since for $x=(x_1,x_2,x_3)^{\top}\in\mathbb{R}^3\setminus\{0\}$
\begin{align*}
  f_{\mathcal{A}^{z_1}}(x)=f_{\mathcal{A}^{z_4}}(x) &=x_3^4-4x_2x_3^3+4x_2^2x_3^2\\
  &=(2x_2x_3-x_3^2)^2\\
  &\geq0,
\end{align*}
\begin{align*}
  f_{\mathcal{A}^{z_2}}(x)=f_{\mathcal{A}^{z_3}}(x) &=x_3^4-4x_2x_3^3+4x_2^2x_3^2\\
  &=(2x_2x_3-x_3^2)^2\\
  &\geq0,
\end{align*}
$\mathcal{A}^z$ is positive semi-definite for each $z\in Y$. According Theorem \ref{th2}, $\mathcal{A}^I$ is positive semi-definite.

(b) Take $\overline{a}_{1122}=\overline{a}_{1133}=\underline{a}_{1122}=\underline{a}_{1133}=1$ and $\overline{a}_{2233}=\underline{a}_{2233}=\frac{2}{3}$. Then $a^c_{1111}=a^c_{2222}=a^c_{iiij}=a^c_{iijk}=0$ for all $i,j,k\in[3]$, $i\neq j$, $i\neq k$, $j\neq k$, $a^c_{3333}=a^c_{1122}=a^c_{1133}=1$, $a^c_{2233}=\frac{2}{3}$, $\delta_{iiii}=\delta_{iijj}=\delta_{1123}=\delta_{1223}=\delta_{1233}=\delta_{1222}=\delta_{2223}=\delta_{1333}=\delta_{1113}=0$ for all $i,j\in[3]$, $i\neq j$, and $\delta_{1123}=\delta_{2333}=1$.

Since for $x=(x_1,x_2,x_3)^{\top}\in\mathbb{R}^3\setminus\{0\}$
\begin{align*}
  f_{\mathcal{A}^{z_1}}(x)=f_{\mathcal{A}^{z_4}}(x) &=x_3^4-4x_2x_3^3-12x_1^2x_2x_3+6\times(\frac{2}{3}x_1^2x_2^2+x_2^2x_3^2+x_1^2x_3^2)\\
  &=(2x_2x_3-x_3^2)^2+6(x_1x_2-x_1x_3)^2\\
  &\geq0,
\end{align*}
\begin{align*}
  f_{\mathcal{A}^{z_2}}(x)=f_{\mathcal{A}^{z_3}}(x) &=x_3^4+4x_2x_3^3+12x_1^2x_2x_3+6\times(\frac{2}{3}x_1^2x_2^2+x_2^2x_3^2+x_1^2x_3^2)\\
  &=(2x_2x_3+x_3^2)^2+6(x_1x_2+x_1x_3)^2\\
  &\geq0,
\end{align*}
$\mathcal{A}^z$ is positive semi-definite for each $z\in Y$. According Theorem \ref{th2}, $\mathcal{A}^I$ is positive semi-definite.
\end{proof}



\begin{corollary}\label{cor3}
Let ${\mathcal{A}}=(a_{ijkl})\in S_{4,3}$, where $a_{1111},a_{2222}\geq0$ and $a_{3333}\geq1$. Then $\mathcal{A}$ is positive semi-definite if $a_{1112}=a_{1222}=a_{2223}=a_{1333}=a_{1113}=0$, $a_{iijj}\geq0$ for $i,j\in[3]$, $i\neq j$, $a_{1223}=a_{1233}=0$, $a_{3333}\geq |a_{2333}|$, $a_{2233}\geq \frac{2}{3}|a_{2333}|$ and one of the following conditions is satisfied.
\begin{itemize}
  \item [(a)] $a_{1123}=0$.
  \item [(b)] $a_{1122}\geq |a_{2333}|$, $a_{1133}\geq |a_{2333}|$, $a_{3333}\geq |a_{1123}|$, $a_{1122}\geq |a_{1123}|$, $a_{1133}\geq |a_{1123}|$, and $a_{2233}\geq \frac{2}{3}|a_{1123}|$.
\end{itemize}
 Moreover, $a_{1111},a_{2222}>0$ and $a_{3333}\geq1$, then $\mathcal{A}$ is positive definite if (a) or (b) is satisfied.
\end{corollary}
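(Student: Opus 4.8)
The plan is to prove the corollary by a direct sum-of-nonnegative-terms decomposition of the quartic form $f_{\mathcal{A}}(x)$, paralleling the explicit squares produced in the proof of Theorem \ref{th3.2} but with the rigid perfect squares relaxed to positive semidefinite binary quadratic forms. Since $\mathcal{A}\in S_{4,3}$ is a single symmetric tensor, by Definition \ref{def1} it suffices to show $f_{\mathcal{A}}(x)\ge 0$ for all $x\in\mathbb{R}^3$ (and $>0$ for $x\ne 0$ in the definite case). First I would substitute the vanishing coefficients $a_{1112}=a_{1222}=a_{2223}=a_{1333}=a_{1113}=0$ and $a_{1223}=a_{1233}=0$ into $f_{\mathcal{A}}$, collapsing it to
\[
f_{\mathcal{A}}(x)=a_{1111}x_1^4+a_{2222}x_2^4+a_{3333}x_3^4+4a_{2333}x_2x_3^3+12a_{1123}x_1^2x_2x_3+6\big(a_{1122}x_1^2x_2^2+a_{1133}x_1^2x_3^2+a_{2233}x_2^2x_3^2\big),
\]
which in case (a) additionally loses the $x_1^2x_2x_3$ term since $a_{1123}=0$.

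The heart of the argument is the $x_2,x_3$ block $Q:=a_{3333}x_3^4+4a_{2333}x_2x_3^3+6a_{2233}x_2^2x_3^2$, which I would read as the binary quadratic form $6a_{2233}u^2+4a_{2333}uv+a_{3333}v^2$ evaluated at $(u,v)=(x_2x_3,\,x_3^2)$. This form is positive semidefinite provided $a_{3333}\ge 0$, $a_{2233}\ge 0$, and $6a_{2233}a_{3333}\ge 4a_{2333}^2$, i.e. $3a_{2233}a_{3333}\ge 2a_{2333}^2$; and the latter follows at once from the standing hypotheses $a_{3333}\ge|a_{2333}|$ and $a_{2233}\ge\frac23|a_{2333}|$ by multiplying these two nonnegative inequalities. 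Hence $Q\ge 0$. In case (a) this already finishes the semidefinite claim, because the remaining terms $a_{1111}x_1^4$, $a_{2222}x_2^4$, $6a_{1122}x_1^2x_2^2$, $6a_{1133}x_1^2x_3^2$ are each nonnegative under $a_{iiii}\ge 0$ and $a_{iijj}\ge 0$.

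For case (b) I would absorb the lone cross term via $2|x_2x_3|\le x_2^2+x_3^2$, giving $12a_{1123}x_1^2x_2x_3\ge -6|a_{1123}|x_1^2(x_2^2+x_3^2)$; combining this with $6a_{1122}x_1^2x_2^2+6a_{1133}x_1^2x_3^2$ produces $6(a_{1122}-|a_{1123}|)x_1^2x_2^2+6(a_{1133}-|a_{1123}|)x_1^2x_3^2\ge 0$ by the bounds $a_{1122},a_{1133}\ge|a_{1123}|$ of (b). Together with $Q\ge 0$ and the nonnegative $x_1^4,x_2^4$ terms this yields $f_{\mathcal{A}}(x)\ge 0$. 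Finally, for positive definiteness under $a_{1111},a_{2222}>0$ and $a_{3333}\ge 1$, I would trace the equality case: $f_{\mathcal{A}}(x)=0$ forces the nonnegative pieces $a_{1111}x_1^4$ and $a_{2222}x_2^4$ to vanish, so $x_1=x_2=0$, and then $f_{\mathcal{A}}(0,0,x_3)=a_{3333}x_3^4$ forces $x_3=0$; hence $x=0$.

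I expect the only genuine obstacle to be the $x_2x_3^3$ term: unlike the pure square $(2x_2x_3-x_3^2)^2$ that appears in Theorem \ref{th3.2} for the special values, here one must recognize $Q$ as a full binary quadratic form in $(x_2x_3,x_3^2)$ and verify its discriminant condition from the interval bounds rather than from an exact square. A secondary point is the bookkeeping: the AM-GM step in (b) touches only the $x_1^2x_2^2$ and $x_1^2x_3^2$ coefficients, so the entire $6a_{2233}x_2^2x_3^2$ remains available for $Q$ and no term is double counted. I note that the decomposition uses only $a_{3333}\ge|a_{2333}|$, $a_{2233}\ge\frac23|a_{2333}|$, and $a_{1122},a_{1133}\ge|a_{1123}|$; the further inequalities listed in (b) are sufficient but not all strictly necessary for this argument.
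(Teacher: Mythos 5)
Your argument is correct, and it is a genuinely different (and self-contained) route from the one the paper intends. The paper gives no explicit proof of this corollary: it is meant to follow from Theorems \ref{th3.1} and \ref{th3.2} by writing $f_{\mathcal{A}}$ as a nonnegative combination of the exact sum-of-squares certificates computed there, e.g.\ $|a_{2333}|\bigl(x_3^2\mp 2x_2x_3\bigr)^2$ for the $x_2x_3^3$ term and $6|a_{1123}|\bigl(x_1x_2\mp x_1x_3\bigr)^2$ for the $x_1^2x_2x_3$ term, plus leftover monomials with nonnegative coefficients; this is exactly why condition (b) lists the bounds $a_{3333}\geq|a_{2333}|$, $a_{2233}\geq\tfrac{2}{3}|a_{2333}|$, $a_{1122},a_{1133}\geq|a_{1123}|$, etc., as separate inequalities. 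You instead verify nonnegativity directly: the discriminant test on the binary form $6a_{2233}u^2+4a_{2333}uv+a_{3333}v^2$ at $(u,v)=(x_2x_3,x_3^2)$, and an AM--GM absorption of $12a_{1123}x_1^2x_2x_3$ into the $x_1^2x_2^2$ and $x_1^2x_3^2$ terms. The two methods agree on the boundary cases (your discriminant condition degenerates to the paper's perfect squares when equality holds), but yours buys something: it only needs the product inequality $3a_{2233}a_{3333}\geq 2a_{2333}^2$ rather than the two factor inequalities separately, and your observation that the hypotheses $a_{3333}\geq|a_{1123}|$ and $a_{2233}\geq\tfrac{2}{3}|a_{1123}|$ in (b) are never used is accurate --- the corollary as stated is not sharp. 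The paper's approach, in exchange, reduces everything to checking the four extremal tensors $\mathcal{A}^{z_i}$, which is the uniform mechanism it applies throughout Section 5. Your equality-case analysis for the positive definite claim (forcing $x_1=x_2=0$ from $a_{1111},a_{2222}>0$ and then $x_3=0$ from $a_{3333}\geq 1$) is also correct, since every piece of your lower bound is individually nonnegative.
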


\begin{theorem}\label{th3.3}
For a $4$th order $3$-dimensional real interval tensor ${\mathcal{A}}^I=[\underline{\mathcal{A}},\overline{\mathcal{A}]}$, where  $\underline{a}_{1111}=0$ and $\underline{a}_{2222}=\underline{a}_{3333}=1$. Then ${\mathcal{A}}^I$ is positive semi-definite if $\underline{a}_{1112}=\underline{a}_{1113}=\overline{a}_{1112}=\overline{a}_{1113}=0$, and one of the following conditions is satisfied.
\begin{itemize}
  \item [(a)] $\underline{a}_{2223}=\underline{a}_{2333}=\underline{a}_{1123}=-1$, $\overline{a}_{2223}=\overline{a}_{2333}=\overline{a}_{1123}=1$, $\underline{a}_{1222}=\underline{a}_{1333}=\overline{a}_{1222}=\overline{a}_{1333}=0$, $\underline{a}_{1223}=\underline{a}_{1233}=\overline{a}_{1223}=\overline{a}_{1233}=0$, and $\underline{a}_{iijj}\geq1$ for all $i,j\in[3]$, $i\neq j$.
  \item [(b)] $\underline{a}_{2333}=-1$, $\overline{a}_{2333}=1$, $\underline{a}_{1222}=\underline{a}_{2223}=\underline{a}_{1333}=\overline{a}_{1222}=\overline{a}_{2223}=\overline{a}_{1333}=0$, $\underline{a}_{1223}=-1$, $\overline{a}_{1223}=1$, $\underline{a}_{1123}=\underline{a}_{1233}=\overline{a}_{1123}=\overline{a}_{1233}=0$, $\underline{a}_{1133},\underline{a}_{2233}\geq1$ and $\underline{a}_{1122}\geq\frac{2}{3}$.
\end{itemize}
\end{theorem}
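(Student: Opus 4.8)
The plan is to invoke assertion (c) of Theorem \ref{th2}: because $\mathcal{A}^I$ is a $4$th order $3$-dimensional symmetric interval tensor, it is positive semi-definite as soon as the four auxiliary tensors $\mathcal{A}^{z_1},\mathcal{A}^{z_2},\mathcal{A}^{z_3},\mathcal{A}^{z_4}$ are, where $z_1,\dots,z_4$ are the four representatives of $Y$ fixed in this section. As in Theorems \ref{th3.1}--\ref{th3.2}, I would first record the monotonicity remark: the uncertainty-free square coefficients $a_{iijj}$ (with $\delta_{iijj}=0$) enter every $f_{\mathcal{A}^z}$ through the manifestly nonnegative monomials $x_i^2x_j^2\ge 0$, so enlarging them only increases $f_{\mathcal{A}^z}$; hence it suffices to check positive semi-definiteness at the smallest admissible values $a_{iijj}=1$ in case (a) and $a_{1133}=a_{2233}=1,\ a_{1122}=\tfrac23$ in case (b). I would then read off $\mathcal{A}^c$ and $\Delta$, note which deviations $\delta_{i_1i_2i_3i_4}$ survive, and form the four quartics $f_{\mathcal{A}^{z_k}}$ via the signs $z_{i_1}z_{i_2}z_{i_3}z_{i_4}$.

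Case (a) is completely parallel to the earlier theorems: the only nonzero deviations are $\delta_{2223}=\delta_{2333}=\delta_{1123}=1$, all governed by the single sign $z_2z_3$, so $\mathcal{A}^{z_1}=\mathcal{A}^{z_4}$, $\mathcal{A}^{z_2}=\mathcal{A}^{z_3}$, and the two surviving polynomials collapse to the transparent sums of squares
\[
f_{\mathcal{A}^{z_1}}(x)=(x_2-x_3)^4+6x_1^2(x_2-x_3)^2,\qquad
f_{\mathcal{A}^{z_2}}(x)=(x_2+x_3)^4+6x_1^2(x_2+x_3)^2,
\]
both nonnegative, so Theorem \ref{th2} applies at once.

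Case (b) is where the genuine difficulty sits, and it is structurally different. Now the uncertain entries are $a_{2333}$ (sign $z_2z_3$) and $a_{1223}$ (sign $z_1z_3$), so the four sign patterns no longer coincide in pairs; one is led to
\[
f_{\mathcal{A}^{z_1}}(x)=x_2^4+x_3^4-4x_2x_3^3-12x_1x_2^2x_3+4x_1^2x_2^2+6x_1^2x_3^2+6x_2^2x_3^2,
\]
the other three arising from it by the sign changes $x_1\mapsto-x_1$ and $x_2\mapsto-x_2$, so it is enough to prove $f_{\mathcal{A}^{z_1}}\ge 0$. The obstacle is that, unlike the term $x_1^2x_2x_3$ produced by $a_{1123}$ in Theorem \ref{th3.2}, the monomial $x_1x_2^2x_3$ produced by $a_{1223}$ does not assemble with the available squares into an $x_1^2(\cdots)^2$ block: the naive grouping $x_2^2\big(6a_{1122}x_1^2-12x_1x_3+6a_{2233}x_3^2\big)$ fails to be nonnegative since $6a_{1122}\cdot 6a_{2233}=4\cdot6=24<36$, and no splitting of the coefficient $-12$ between the $(x_1x_2)(x_2x_3)$ and $(x_2^2)(x_1x_3)$ channels fits within the budgets supplied by $x_1^2x_2^2,\,x_1^2x_3^2,\,x_2^4,\,x_2^2x_3^2$. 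Positivity must instead borrow from the $x_3^4$, $x_2x_3^3$ and $x_2^4$ terms at once, so a diagonal / AM--GM certificate is unavailable.

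To overcome this I would abandon the search for an elementary sum of squares and instead complete the square in $x_1$ after multiplying by the nonnegative factor $4x_2^2+6x_3^2$:
\[
(4x_2^2+6x_3^2)\,f_{\mathcal{A}^{z_1}}(x)=\big((4x_2^2+6x_3^2)x_1-6x_2^2x_3\big)^2+N(x_2,x_3),
\]
with $N(x_2,x_3)=(4x_2^2+6x_3^2)\big(x_2^4+x_3^4-4x_2x_3^3+6x_2^2x_3^2\big)-36x_2^4x_3^2$ a binary sextic. Since $4x_2^2+6x_3^2>0$ except at $x_2=x_3=0$ (where $f_{\mathcal{A}^{z_1}}$ vanishes outright), nonnegativity of $f_{\mathcal{A}^{z_1}}$ reduces to $N\ge 0$, which is the main remaining step; this is settled by the classical fact that a positive semi-definite binary form is a sum of two squares, or concretely by checking that the dehomogenized sextic $4t^6-6t^4-16t^3+40t^2-24t+6$ is everywhere positive. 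The three companion polynomials then inherit $\ge 0$ from $f_{\mathcal{A}^{z_1}}\ge 0$ through the variable sign symmetries, and a final application of Theorem \ref{th2} gives that $\mathcal{A}^I$ is positive semi-definite in case (b) as well.
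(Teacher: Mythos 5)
Your overall strategy (reduce to the four tensors $\mathcal{A}^{z_1},\dots,\mathcal{A}^{z_4}$ via Theorem \ref{th2}, use monotonicity in the square-term coefficients to pass to the extreme values, then certify nonnegativity of each quartic) is exactly the paper's, and your case (a) coincides with the paper's computation: your $(x_2-x_3)^4+6x_1^2(x_2-x_3)^2$ is literally the paper's $(x_2^2-2x_2x_3+x_3^2)^2+6(x_1x_2-x_1x_3)^2$. Your observation that in case (b) the four polynomials are related by the sign changes $x_1\mapsto-x_1$, $x_2\mapsto-x_2$ is also correct and slightly streamlines the paper's treatment.

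The problem is in your handling of case (b). First, your stated reason for abandoning a sum-of-squares certificate is wrong: you only searched among ``diagonal'' groupings such as $x_2^2(6a_{1122}x_1^2-12x_1x_3+6a_{2233}x_3^2)$, but squares of quadratics that mix different monomials do the job. The paper exhibits
\[
f_{\mathcal{A}^{z_1}}(x)=(x_2^2-2x_1x_3)^2+(x_3^2-2x_2x_3+2x_1x_2)^2+2(x_1x_3-x_2x_3)^2,
\]
which one checks by direct expansion, so the ``genuine difficulty'' you describe dissolves. Second, and more importantly, your substitute argument has a real gap: after reducing to $N(x_2,x_3)\ge 0$, i.e.\ to positivity of the sextic $4t^6-6t^4-16t^3+40t^2-24t+6$, you do not actually establish it. The appeal to the classical fact that a positive semi-definite binary form is a sum of two squares is circular here (it presupposes the very nonnegativity you need), and ``checking that the dehomogenized sextic is everywhere positive'' is precisely the remaining content of the hard case, left undone. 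The claim is true (the minimum is roughly $1.6$ near $t\approx 0.45$), and your route could be completed by a Sturm-sequence count or an explicit SOS for $N$, but as written the decisive inequality is asserted rather than proved. I recommend replacing the detour by the paper's three-square identity above, which closes case (b) in one line.
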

\begin{proof}
Let $\overline{a}_{1111}=\underline{a}_{1111}=0$, and $\overline{a}_{2222}=\underline{a}_{2222}=\overline{a}_{3333}=\underline{a}_{3333}=1$.

(a) Take $\overline{a}_{iijj}=\underline{a}_{iijj}=1$ for all $i,j\in[3]$, $i\neq j$. Then $a^c_{1111}=a^c_{iiij}=a^c_{iijk}=0$, $a^c_{2222}=a^c_{3333}=a_{iijj}=1$, for all $i,j,k\in[3]$, $i\neq j$, $i\neq k$, $j\neq k$, $\delta_{iiii}=\delta_{iijj}=\delta_{1112}=\delta_{1222}=\delta_{1333}=\delta_{1113}=\delta_{1223}=\delta_{1233}=0$ for all $i,j\in[3]$, $i\neq j$ and $\delta_{2223}=\delta_{2333}=\delta_{1123}=1$.

Since for $x=(x_1,x_2,x_3)^{\top}\in\mathbb{R}^3\setminus\{0\}$
\begin{align*}
  f_{\mathcal{A}^{z_1}}(x)=f_{\mathcal{A}^{z_4}}(x) &=x_2^4+x_3^4-4(x_2^3x_3+x_2x_3^3)-12x_1^2x_2x_3+6(x_1^2x_2^2+x_2^2x_3^2+x_1^2x_3^2)\\
  &=(x_2^2-2x_2x_3+x_3^2)^2+6(x_1x_2-x_1x_3)^2\\
  &\geq0,
\end{align*}
\begin{align*}
  f_{\mathcal{A}^{z_2}}(x)=f_{\mathcal{A}^{z_3}}(x) &=x_2^4+x_3^4+4(x_2^3x_3+x_2x_3^3)+12x_1^2x_2x_3+6(x_1^2x_2^2+x_2^2x_3^2+x_1^2x_3^2)\\
  &=(x_2^2+2x_2x_3+x_3^2)^2+6(x_1x_2+x_1x_3)^2\\
  &\geq0,
\end{align*}
$\mathcal{A}^z$ is positive semi-definite for each $z\in Y$. According Theorem \ref{th2}, $\mathcal{A}^I$ is positive semi-definite.

(b) Take $\overline{a}_{1133}= \overline{a}_{2233}=\underline{a}_{1133}=\underline{a}_{2233}=1$ and  $\overline{a}_{1122}=\underline{a}_{1122}=\frac{2}{3}$ for all $i,j\in[3]$, $i\neq j$. Then $a^c_{1111}=a^c_{iiij}=a^c_{iijk}=0$, $a^c_{2222}=a^c_{3333}=a_{iijj}=1$, for all $i,j,k\in[3]$, $i\neq j$, $i\neq k$, $j\neq k$, $\delta_{iiii}=\delta_{iijj}=\delta_{1112}=\delta_{1222}=\delta_{2223}=\delta_{1333}=\delta_{1113}=\delta_{1123}=\delta_{1233}=0$ for all $i,j\in[3]$, $i\neq j$ and $\delta_{2333}=\delta_{1223}=1$.

Since for $x=(x_1,x_2,x_3)^{\top}\in\mathbb{R}^3\setminus\{0\}$
\begin{align*}
  f_{\mathcal{A}^{z_1}}(x) &=x_2^4+x_3^4-4x_2x_3^3-12x_1x_2^2x_3+6\times(\frac{2}{3}x_1^2x_2^2+x_2^2x_3^2+x_1^2x_3^2)\\
  &=(x_2^2-2x_1x_3)^2+(x_3^2-2x_2x_3+2x_1x_2)^2+2(x_1x_3-x_2x_3)^2\\
  &\geq0,
\end{align*}
\begin{align*}
  f_{\mathcal{A}^{z_2}}(x)&=x_2^4+x_3^4+4x_2x_3^3+12x_1x_2^2x_3+6\times(\frac{2}{3}x_1^2x_2^2+x_2^2x_3^2+x_1^2x_3^2)\\
  &=(x_2^2+2x_1x_3)^2+(x_3^2+2x_2x_3+2x_1x_2)^2+2(x_1x_3-x_2x_3)^2\\
  &\geq0,
\end{align*}
\begin{align*}
  f_{\mathcal{A}^{z_3}}(x)&=x_2^4+x_3^4+4x_2x_3^3-12x_1x_2^2x_3+6\times(\frac{2}{3}x_1^2x_2^2+x_2^2x_3^2+x_1^2x_3^2)\\
  &=(x_2^2-2x_1x_3)^2+(x_3^2+2x_2x_3-2x_1x_2)^2+2(x_1x_3+x_2x_3)^2\\
  &\geq0,
\end{align*}
\begin{align*}
 f_{\mathcal{A}^{z_4}}(x) &=x_2^4+x_3^4-4x_2x_3^3+12x_1x_2^2x_3+6\times(\frac{2}{3}x_1^2x_2^2+x_2^2x_3^2+x_1^2x_3^2)\\
  &=(x_2^2+2x_1x_3)^2+(x_3^2-2x_2x_3-2x_1x_2)^2+2(x_1x_3+x_2x_3)^2\\
  &\geq0,
\end{align*}
$\mathcal{A}^z$ is positive semi-definite for each $z\in Y$. According Theorem \ref{th2}, $\mathcal{A}^I$ is positive semi-definite.

\end{proof}


\begin{corollary}\label{cor4}
Let ${\mathcal{A}}=(a_{ijkl})\in S_{4,3}$, where $a_{1111}\geq0$ and $a_{2222},a_{3333}\geq1$. Then $\mathcal{A}$ is positive semi-definite if $a_{1112}=a_{1222}=a_{1333}=a_{1113}=0$, and one of the following conditions is satisfied.
\begin{itemize}
  \item [(a)] $a_{iijj}\geq|a_{2333}|$, $a_{iijj}\geq|a_{2223}|$, $a_{iijj}\geq|a_{1123}|$, $a_{iijj}\geq|a_{1123}|$ for $i,j\in[3]$, $i\neq j$, and $a_{1223}=a_{1233}=0$.
   \item [(b)] $a_{1122}\geq\frac{2}{3}|a_{2333}|$, $a_{1122}\geq\frac{2}{3}|a_{1223}|$, $a_{1133}\geq|a_{2333}|$, $a_{1133}\geq|a_{1223}|$, $a_{2233}\geq|a_{2333}|$, $a_{2233}\geq|a_{1223}|$, and $a_{1123}=a_{1233}=a_{2223}=0$.
 \end{itemize}
 Moreover, $a_{1111}>0$ and $a_{2222},a_{3333}\geq1$, then $\mathcal{A}$ is positive definite if $a_{1112}=a_{1222}=a_{1333}=a_{1113}=0$, and (a) or (b) is satisfied.
\end{corollary}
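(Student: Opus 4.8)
The plan is to obtain the corollary as the single-tensor specialization of Theorem~\ref{th3.3}. Since the given $\mathcal{A}$ is already symmetric, by Definition~\ref{def1} positive (semi-)definiteness is exactly the sign condition $f_{\mathcal{A}}(x)=\mathcal{A}x^4\ge 0$ (resp. $>0$) on $\mathbb{R}^3\setminus\{0\}$, so the entire argument is about controlling the explicit quartic form written out just before Theorem~\ref{th3.1}. First I would insert the hypotheses $a_{1112}=a_{1222}=a_{1333}=a_{1113}=0$ together with the vanishing fully-off-diagonal entries of the relevant case; this kills every cubic-in-one-variable term and collapses $f_{\mathcal{A}}$ to precisely the shape handled in the proof of Theorem~\ref{th3.3}, a combination of $a_{1111}x_1^4$, the square coefficients $6a_{iijj}(x_ix_j)^2$, and the coupling monomials carrying $a_{2223},a_{2333},a_{1123}$ (case (a)) or $a_{2333},a_{1223}$ (case (b)).

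The bridge to Theorem~\ref{th3.3} is then to view $\mathcal{A}$ as a member of a symmetric interval tensor $\mathcal{A}^I=[\mathcal{A}^c-\Delta,\mathcal{A}^c+\Delta]$ whose center has vanishing non-square entries, whose radii are $\delta_{2223}=|a_{2223}|$, $\delta_{2333}=|a_{2333}|$, $\delta_{1123}=|a_{1123}|$ (and $0$ otherwise) in case (a), and whose square entries coincide with those of $\mathcal{A}$. Because each entry of $\mathcal{A}$ lies in its prescribed interval, $\mathcal{A}\in\mathcal{A}^I$; so once $\mathcal{A}^I$ is shown positive (semi-)definite, so is $\mathcal{A}$. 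Positive (semi-)definiteness of $\mathcal{A}^I$ is in turn reduced by Theorem~\ref{th2}(c) to that of the four vertex tensors $\mathcal{A}^{z_1},\dots,\mathcal{A}^{z_4}$, which is exactly what the completion-of-squares in the proof of Theorem~\ref{th3.3} delivers; the inequality hypotheses on the $a_{iijj}$ are what force every square coefficient appearing there to stay nonnegative, and raising $a_{1111}$ or any $a_{iijj}$ above the base value only adds a nonnegative multiple of $x_1^4$ or $(x_ix_j)^2$ and so preserves the sign.

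Concretely, the engine is the same two-part split used in the theorem: the $x_1^2$-weighted part $6x_1^2\bigl(a_{1122}x_2^2+2a_{1123}x_2x_3+a_{1133}x_3^2\bigr)$ is nonnegative because $a_{1122}\ge|a_{1123}|$ and $a_{1133}\ge|a_{1123}|$ give $a_{1122}a_{1133}\ge a_{1123}^2$, while the $x_1$-free part is a binary quartic in $(x_2,x_3)$ that I would complete to a perfect fourth power plus a nonnegative remainder. I expect the main obstacle to be case (b), where $f_{\mathcal{A}}$ does not factor across the hyperplane $x_1=0$ and must instead be resolved as a genuine coupled sum of three squares of the type $(x_2^2-2x_1x_3)^2+(x_3^2-2x_2x_3+2x_1x_2)^2+2(x_1x_3-x_2x_3)^2$ occurring in the proof of Theorem~\ref{th3.3}(b); matching each of the four sign patterns $z_1,\dots,z_4$ to such a decomposition while keeping all square coefficients nonnegative under the stated bounds is the delicate bookkeeping, and it is also where one should double-check that the stated inequalities are in fact strong enough (the cubic couplings must be dominated not only by the $a_{iijj}$ but compatibly with $a_{2222},a_{3333}$).

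A secondary difficulty is the \emph{moreover} clause: with $a_{1111}>0$ the term $a_{1111}x_1^4$ forces strict positivity off $\{x_1=0\}$, but on $\{x_1=0\}$ one must still argue that the residual binary quartic in $(x_2,x_3)$ is a positive-\emph{definite} form, which uses $a_{2222},a_{3333}\ge1$ rather than mere nonnegativity and is the precise point where the passage from semi-definiteness to definiteness could genuinely fail in boundary configurations, so I would verify that the completed squares do not all vanish simultaneously for any nonzero $(x_2,x_3)$.
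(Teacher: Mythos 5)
Your overall route --- embed $\mathcal{A}$ into a symmetric interval tensor whose center carries the square terms and whose radii are the absolute values of the surviving cubic couplings, reduce to the four vertex tensors $\mathcal{A}^{z_1},\dots,\mathcal{A}^{z_4}$ via Theorem~\ref{th2}(c), and reuse the completed-square decompositions from the proof of Theorem~\ref{th3.3} after scaling --- is exactly the derivation the paper intends; the paper states Corollary~\ref{cor4} without any proof, as an immediate consequence of Theorem~\ref{th3.3}, so there is no competing argument to compare against. The problem is that your proposal defers precisely the step that decides the matter, the ``delicate bookkeeping'' of whether the stated inequalities can fund the scaled decomposition, and that step cannot be completed: the corollary as printed is false. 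The scaled squares $c\,(x_2^2-2x_2x_3+x_3^2)^2$ in case (a), and $c\,(x_2^2\pm 2x_1x_3)^2+c\,(x_3^2\pm 2x_2x_3\pm 2x_1x_2)^2+\cdots$ in case (b), each consume $c=\max\{|a_{2223}|,|a_{2333}|\}$ (resp.\ $c=\max\{|a_{2333}|,|a_{1223}|\}$) units of the coefficients of $x_2^4$ and $x_3^4$, so the argument needs $a_{2222},a_{3333}\ge c$; the hypotheses supply only $a_{2222},a_{3333}\ge 1$.

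Concretely, take $a_{1111}=0$, $a_{2222}=a_{3333}=1$, $a_{2223}=-10$, $a_{1122}=a_{1133}=a_{2233}=10$, and all other independent entries zero. Every condition of case (a) holds, yet $f_{\mathcal{A}}(0,1,0.2)=1+(0.2)^4-40(0.2)+60(0.04)\approx-4.6<0$, so $\mathcal{A}$ is not positive semi-definite. The ``moreover'' clause fails as well: with $a_{1111}=a_{2222}=a_{3333}=a_{1122}=a_{1133}=a_{2233}=1$ and $a_{2223}=a_{2333}=-1$ (all other independent entries zero), case (a) holds with $a_{1111}>0$, but $f_{\mathcal{A}}(0,1,1)=1+1-4-4+6=0$, so $\mathcal{A}$ is not positive definite. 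You in fact flagged both failure points --- that the couplings must be dominated ``compatibly with $a_{2222},a_{3333}$,'' and that definiteness could fail on $\{x_1=0\}$ --- but flagging is not resolving: the missing hypotheses are $a_{2222},a_{3333}\ge\max\{|a_{2223}|,|a_{2333}|\}$ in (a) and $a_{2222},a_{3333}\ge\max\{|a_{2333}|,|a_{1223}|\}$ in (b), analogous to the conditions $a_{3333}\ge|a_{2333}|$ in Corollary~\ref{cor3} and $a_{iiii}\ge|a_{1112}|$, etc., in Corollary~\ref{cor5}, which Corollary~\ref{cor4} evidently dropped. With those added (and with strict inequalities or a positivity margin on $\{x_1=0\}$ for the definiteness claim), your plan goes through as written.
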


\begin{theorem}\label{th3.4}
For a $4$th order $3$-dimensional real interval tensor ${\mathcal{A}}^I=[\underline{\mathcal{A}},\overline{\mathcal{A}]}$, where  $\underline{a}_{1111}=\underline{a}_{2222}=\underline{a}_{3333}=1$. Then ${\mathcal{A}}^I$ is positive definite if $\underline{a}_{1222}=\underline{a}_{2333}=\underline{a}_{1113}=\overline{a}_{1222}=\overline{a}_{2333}=\overline{a}_{1113}=0$, $\underline{a}_{1112}=\underline{a}_{2223}=-1$, $\overline{a}_{1112}=\overline{a}_{2223}=1$ and one of the following conditions is satisfied.
\begin{itemize}
  \item [(a)] $\underline{a}_{iijj}=\frac{2}{3}$, $\underline{a}_{iijk}=\overline{a}_{iijk}=0$ for all $i,j,k\in[3]$, $i\neq j$, $i\neq k$, $j\neq k$, and $\underline{a}_{1333}=-1$, $\overline{a}_{1333}=1$.
   \item [(b)] $\underline{a}_{iijj}=1$ for all $i,j\in[3]$, $i\neq j$, $\underline{a}_{1123}=\underline{a}_{1223}=\overline{a}_{1123}=\overline{a}_{1223}=0$, $\underline{a}_{1233}=-1$, $\overline{a}_{1233}=1$, and $\underline{a}_{1333}=\overline{a}_{1333}=0$.
 \end{itemize}
\end{theorem}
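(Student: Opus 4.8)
The plan is to follow the template already used for Theorems \ref{th3.1}--\ref{th3.3}: invoke the finite characterization of positive definiteness and then certify each resulting polynomial by exhibiting a sum-of-squares decomposition. Concretely, by part $(c)$ of Theorem \ref{th2} the symmetric interval tensor $\mathcal{A}^I$ is positive definite if and only if each auxiliary tensor $\mathcal{A}^z$, $z\in Y$, is positive definite; and since $\mathcal{A}^{-z}=\mathcal{A}^z$ for even $m$, it suffices to treat the four representatives $z_1=(1,1,1)^\top$, $z_2=(1,1,-1)^\top$, $z_3=(1,-1,1)^\top$, $z_4=(-1,1,1)^\top$. Because every entry carrying a nonzero radius $\delta$ is an odd-type coefficient (precisely those listed with $\pm1$ endpoints), while the surviving center entries are exactly the square coefficients $a^c_{iiii}$ and $a^c_{iijj}$, I would first fix $\mathcal{A}^c$ and $\Delta$ from the hypotheses in each of the two cases, and then write out the three-variable quartic $f_{\mathcal{A}^{z_k}}(x)=\mathcal{A}^{z_k}x^4$ using the displayed formula for $f_{\mathcal{A}}$ together with the sign rule $a^z_{i_1i_2i_3i_4}=a^c_{i_1i_2i_3i_4}-\delta_{i_1i_2i_3i_4}\mbox{sgn}(z_{i_1}z_{i_2}z_{i_3}z_{i_4})$.

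A useful preliminary reduction is that, since all nonzero center entries are invariant under sign changes of the variables, one checks $f_{\mathcal{A}^{z}}(x)=f_{\mathcal{A}^{z_1}}(T_z x)$ for every $z\in Y$; as $T_z$ is a bijection fixing only the origin, it is enough to prove that $f_{\mathcal{A}^{z_1}}$ is positive for all $x\neq0$ and the remaining three follow automatically (the paper may instead display all four, which amounts to the same squares with signs flipped). For case $(a)$ the tensor $\mathcal{A}^{z_1}=\underline{\mathcal{A}}$ gives $f_{\mathcal{A}^{z_1}}(x)=x_1^4+x_2^4+x_3^4-4x_1^3x_2-4x_2^3x_3-4x_1x_3^3+4(x_1^2x_2^2+x_1^2x_3^2+x_2^2x_3^2)$, which I would present as the clean sum of three squares $x_1^2(x_1-2x_2)^2+x_2^2(x_2-2x_3)^2+x_3^2(x_3-2x_1)^2$. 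Nonnegativity is then immediate, and I would upgrade it to positive definiteness by checking that the only common zero of the three factors is $x=0$: $x_1\neq0$ forces $x_1=2x_2$, hence $x_2=2x_3$, hence $x_3=2x_1=8x_3$, a contradiction.

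The genuinely hard case is $(b)$, where $f_{\mathcal{A}^{z_1}}(x)=x_1^4+x_2^4+x_3^4-4x_1^3x_2-4x_2^3x_3-12x_1x_2x_3^2+6(x_1^2x_2^2+x_1^2x_3^2+x_2^2x_3^2)$. Here the naive ``chain'' choice $(x_1^2-2x_1x_2)^2+(x_2^2-2x_2x_3)^2$ overshoots: the complementary remainder $x_3^4+x_3^2(6x_1^2-12x_1x_2+2x_2^2)+2x_1^2x_2^2$ already becomes negative (for instance at $x=(1,1,\sqrt2)$), so no decomposition into such axis-aligned squares can succeed. The hard part will therefore be to produce a valid sum-of-squares certificate whose summands genuinely mix the cross monomials $x_1x_2,\,x_1x_3,\,x_2x_3$ against $x_1^2,x_2^2,x_3^2$; equivalently, to exhibit a positive semidefinite Gram matrix in the basis $(x_1^2,x_2^2,x_3^2,x_1x_2,x_1x_3,x_2x_3)$ meeting the coefficient-matching constraints, the off-diagonal coupling forced by $-12x_1x_2x_3^2$ being the awkward one. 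I would look for such a representation with rational entries, using the simplification $6x_1^2x_3^2+6x_2^2x_3^2-12x_1x_2x_3^2=6x_3^2(x_1-x_2)^2$ to guide the ansatz.

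Once an SOS is in hand, strict positivity (positive definiteness rather than mere semidefiniteness) follows by verifying that the summands have no common nonzero zero, which is exactly where the diagonal hypotheses $a_{iiii}=1>0$ enter. Finally, I would convert the positive definiteness of the four $\mathcal{A}^{z_k}$ back into that of $\mathcal{A}^I$ via Theorem \ref{th2}$(c)$, disposing of both subcases.
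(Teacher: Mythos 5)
Your overall strategy matches the paper's: reduce to the four representatives $\mathcal{A}^{z_1},\dots,\mathcal{A}^{z_4}$ via Theorem \ref{th2}(c) and certify each quartic by an explicit sum of squares. Your treatment of case (a) is complete and agrees with the paper, whose certificate $(x_1^2-2x_1x_2)^2+(x_2^2-2x_2x_3)^2+(x_3^2-2x_1x_3)^2$ is literally your three squares; your explicit verification that the factors have no common nonzero root is a welcome addition, since the paper asserts strict positivity without comment. Your symmetry observation $f_{\mathcal{A}^z}(x)=f_{\mathcal{A}^{z_1}}(T_zx)$ is also correct in this setting (the surviving center entries are all of type $iiii$ or $iijj$, hence invariant under $T_z$), and it legitimately collapses the four verifications to one, whereas the paper writes out all four polynomials separately.

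The gap is case (b). You correctly diagnose that the axis-aligned ``chain'' squares fail (your counterexample $x=(1,1,\sqrt2)$ is right) and that any certificate must couple the monomials $x_1x_2,\,x_1x_3,\,x_2x_3$ with $x_1^2,x_2^2,x_3^2$, but you stop at an ansatz (``I would look for such a representation\dots'') without exhibiting one. Since the entire mathematical content of case (b) is this certificate, the proof is incomplete as written. For the record, the paper's decomposition is
$$f_{\mathcal{A}^{z_1}}(x)=(x_1^2-2x_1x_2+x_3^2)^2+(x_2^2-2x_2x_3+2x_1x_3)^2+2(x_2x_3-x_1x_2)^2,$$
which one checks by expansion (the cross terms $-4x_1x_2x_3^2-8x_1x_2x_3^2=-12x_1x_2x_3^2$ and $4x_1x_2^2x_3-4x_1x_2^2x_3=0$ are the delicate cancellations), and whose three factors vanish simultaneously only at $x=0$: the third forces $x_2=0$ or $x_1=x_3$, and either branch collapses to the origin via the first two. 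With this certificate supplied (and its images under $x_i\mapsto z_ix_i$ for the other three representatives, which your reduction makes automatic), your argument goes through.
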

\begin{proof}
Let $\overline{a}_{iiii}=\underline{a}_{iiii}=1$ for all $i\in[3]$.

(a) Take $\overline{a}_{iijj}=\underline{a}_{iijj}=\frac{2}{3}$ for all $i,j\in[3]$, $i\neq j$.
Then $a^c_{iiij}=a^c_{iijk}=0$, $a^c_{iiii}=1$, $a_{iijj}=\frac{2}{3}$ for all $i,j,k\in[3]$, $i\neq j$, $i\neq k$, $j\neq k$, $\delta_{iiii}=\delta_{iijj}=\delta_{1222}=\delta_{2333}=\delta_{1113}=\delta_{iijk}=0$ for all $i,j\in[3]$, $i\neq j$ and $\delta_{1112}=\delta_{2223}=\delta_{1333}=1$.

Since for $x=(x_1,x_2,x_3)^{\top}\in\mathbb{R}^3\setminus\{0\}$
\begin{align*}
  f_{\mathcal{A}^{z_1}}(x) &=x_1^4+x_2^4+x_3^4-4(x_1^3x_2+x_2^3x_3+x_1x_3^3)+4(x_1^2x_2^2+x_2^2x_3^2+x_1^2x_3^2)\\
  &=(x_1^2-2x_1x_2)^2+(x_2^2-2x_2x_3)^2+(x_3^2-2x_1x_3)^2\\
  &>0,
\end{align*}
\begin{align*}
  f_{\mathcal{A}^{z_2}}(x)&=x_1^4+x_2^4+x_3^4-4(x_1^3x_2-x_2^3x_3-x_1x_3^3)+4(x_1^2x_2^2+x_2^2x_3^2+x_1^2x_3^2)\\
  &=(x_1^2-2x_1x_2)^2+(x_2^2+2x_2x_3)^2+(x_3^2+2x_1x_3)^2\\
  &>0,
\end{align*}
\begin{align*}
  f_{\mathcal{A}^{z_3}}(x)&=x_1^4+x_2^4+x_3^4-4(-x_1^3x_2-x_2^3x_3+x_1x_3^3)+4(x_1^2x_2^2+x_2^2x_3^2+x_1^2x_3^2)\\
  &=(x_1^2+2x_1x_2)^2+(x_2^2+2x_2x_3)^2+(x_3^2-2x_1x_3)^2\\
  &>0,
\end{align*}
\begin{align*}
 f_{\mathcal{A}^{z_4}}(x)&=x_1^4+x_2^4+x_3^4-4(-x_1^3x_2+x_2^3x_3-x_1x_3^3)+4(x_1^2x_2^2+x_2^2x_3^2+x_1^2x_3^2)\\
  &=(x_1^2+2x_1x_2)^2+(x_2^2-2x_2x_3)^2+(x_3^2+2x_1x_3)^2\\
  &>0,
\end{align*}
$\mathcal{A}^z$ is positive definite for each $z\in Y$. According Theorem \ref{th2}, $\mathcal{A}^I$ is positive definite.

(b) Take $\overline{a}_{iijj}=\underline{a}_{iijj}=1$ for all $i,j\in[3]$, $i\neq j$.
Then $a^c_{iiij}=a^c_{iijk}=0$, $a^c_{iiii}=a_{iijj}=1$ for all $i,j,k\in[3]$, $i\neq j$, $i\neq k$, $j\neq k$, $\delta_{iiii}=\delta_{iijj}=\delta_{1222}=\delta_{2333}=\delta_{1113}=\delta_{1333}=\delta_{1123}=\delta_{1223}=0$ for all $i,j\in[3]$, $i\neq j$ and $\delta_{1112}=\delta_{2223}=\delta_{1233}=1$.

Since for $x=(x_1,x_2,x_3)^{\top}\in\mathbb{R}^3\setminus\{0\}$
\begin{align*}
  f_{\mathcal{A}^{z_1}}(x) &=x_1^4+x_2^4+x_3^4-4(x_1^3x_2+x_2^3x_3)-12x_1x_2x_3^2+6(x_1^2x_2^2+x_2^2x_3^2+x_1^2x_3^2)\\
  &=(x_1^2-2x_1x_2+x_3^2)^2+(x_2^2-2x_2x_3+2x_1x_3)^2+2(x_2x_3-x_1x_2)^2\\
  &>0,
\end{align*}
\begin{align*}
  f_{\mathcal{A}^{z_2}}(x) &=x_1^4+x_2^4+x_3^4-4(x_1^3x_2-x_2^3x_3)-12x_1x_2x_3^2+6(x_1^2x_2^2+x_2^2x_3^2+x_1^2x_3^2)\\
  &=(x_1^2-2x_1x_2+x_3^2)^2+(x_2^2+2x_2x_3-2x_1x_3)^2+2(x_2x_3+x_1x_2)^2\\
  &>0,
\end{align*}
\begin{align*}
  f_{\mathcal{A}^{z_3}}(x) &=x_1^4+x_2^4+x_3^4+4(x_1^3x_2+x_2^3x_3)+12x_1x_2x_3^2+6(x_1^2x_2^2+x_2^2x_3^2+x_1^2x_3^2)\\
  &=(x_1^2+2x_1x_2+x_3^2)^2+(x_2^2+2x_2x_3+2x_1x_3)^2+2(x_2x_3-x_1x_2)^2\\
  &>0,
\end{align*}
\begin{align*}
 f_{\mathcal{A}^{z_4}}(x) &=x_1^4+x_2^4+x_3^4-4(-x_1^3x_2+x_2^3x_3)+12x_1x_2x_3^2+6(x_1^2x_2^2+x_2^2x_3^2+x_1^2x_3^2)\\
  &=(x_1^2+2x_1x_2+x_3^2)^2+(x_2^2-2x_2x_3-2x_1x_3)^2+2(x_2x_3+x_1x_2)^2\\
  &>0,
\end{align*}
$\mathcal{A}^z$ is positive definite for each $z\in Y$. According Theorem \ref{th2}, $\mathcal{A}^I$ is positive definite.

\end{proof}
\begin{corollary}\label{cor5}
Let ${\mathcal{A}}=(a_{ijkl})\in S_{4,3}$, where $a_{1111},a_{2222},a_{3333}\geq1$. Then $\mathcal{A}$ is positive definite if $a_{1222}=a_{2333}=a_{1113}=0$, and one of the following conditions is satisfied.
\begin{itemize}
  \item [(a)] $a_{iiii}\geq|a_{1112}|$, $a_{iiii}\geq |a_{2223}|$, $a_{iiii}\geq|a_{1333}|$, $a_{iijj}\geq \frac{2}{3}|a_{1112}|$, $a_{iijj}\geq \frac{2}{3}|a_{2223}|$, $a_{iijj}\geq \frac{2}{3}|a_{1333}|$, and $a_{iijk}=0$ for $i,j,k\in[3]$, $i\neq j$, $i\neq k$, $j\neq k$.
   \item [(b)]$a_{iiii}\geq|a_{1112}|$, $a_{iiii}\geq |a_{2223}|$, $a_{iiii}\geq|a_{1123}|$, $a_{iijj}\geq |a_{1112}|$, $a_{iijj}\geq |a_{2223}|$, $a_{iijj}\geq |a_{1123}|$ for $i,j\in[3]$, $i\neq j$ and $a_{1113}=a_{1223}=a_{1233}=0$.
 \end{itemize}
\end{corollary}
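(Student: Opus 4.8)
The plan is to verify the defining inequality $f_{\mathcal{A}}(x)=\mathcal{A}x^4>0$ for all $x\in\mathbb{R}^3\setminus\{0\}$, routing through the finite reduction of Theorem \ref{th2}. Given $\mathcal{A}$ as in the hypotheses, I would attach to it the symmetric interval tensor $\mathcal{A}^I=[\mathcal{A}^c-\Delta,\mathcal{A}^c+\Delta]$ whose centre keeps the square coefficients, $a^c_{iiii}=a_{iiii}$ and $a^c_{iijj}=a_{iijj}$, and zeroes the centre of every cubic and mixed coefficient, while the radius vanishes on the square terms and equals $|a_{\cdots}|$ on the rest. Then $\mathcal{A}\in\mathcal{A}^I$, so by assertion $(c)$ of Theorem \ref{th2} it is enough to prove that the four auxiliary tensors $\mathcal{A}^{z_1},\dots,\mathcal{A}^{z_4}$ are positive definite; this is the same device that drives the proofs of Theorems \ref{th3.1}--\ref{th3.4}, the only difference being that the square coefficients are now kept general rather than normalised to $1$ and $\tfrac23$.

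For part $(a)$ the vanishing conditions make $f_{\mathcal{A}^z}$ split, with no shared monomials, into three binary forms, one per coordinate pair, e.g.
\begin{equation*}
B_{12}(x)=a_{1111}x_1^4+4a^z_{1112}x_1^3x_2+6a_{1122}x_1^2x_2^2,
\end{equation*}
together with the cyclic analogues $B_{23},B_{13}$. Viewing $B_{12}$ as a quadratic form in $(x_1^2,x_1x_2)$, its Gram determinant is $6a_{1111}a_{1122}-4(a^z_{1112})^2$, and the hypotheses $a_{iiii}\ge|a_{\cdots}|$, $a_{iijj}\ge\tfrac23|a_{\cdots}|$ give exactly $6a_{iiii}a_{iijj}\ge4(a_{\cdots})^2$; hence each block is positive semidefinite and $f_{\mathcal{A}^z}=B_{12}+B_{23}+B_{13}\ge0$. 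This reproduces, coefficient by coefficient, the perfect squares $(x_1^2-2x_1x_2)^2+\cdots$ that appear in the proof of Theorem \ref{th3.4}$(a)$.

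The step I expect to be genuinely delicate is promoting this nonnegativity to strict positivity, since the domination inequalities may hold with equality and a semidefinite binary form can vanish along a whole line. I would argue by contradiction: if $f_{\mathcal{A}^z}(x)=0$ with $x\neq0$ then $B_{12}=B_{23}=B_{13}=0$, and whenever some $x_i\neq0$ the pure power $a_{iiii}x_i^{4}$ (with $a_{iiii}\ge1>0$) sitting in the block anchored at $x_i$ forces the neighbouring variable to be nonzero as well, the block to be degenerate, and the relation $|x_i|=2|x_{i+1}|$ to hold. Chasing these around the cycle gives $|x_1|=2|x_2|=4|x_3|=8|x_1|$, impossible for $x\neq0$; thus $x=0$, a contradiction, and each $\mathcal{A}^{z_k}$ is positive definite. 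Part $(b)$ runs on the same plan but over the more elaborate three-square decomposition found in the proof of Theorem \ref{th3.4}$(b)$, whose squares entangle all three variables; I expect the main labour there to be checking that each square and each leftover term is realisable from the case-$(b)$ bounds and then repeating the degeneracy-and-chaining argument, which is the heaviest bookkeeping in the proof.
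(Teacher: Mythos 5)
Your treatment of part (a) is essentially correct and is, in substance, the computation behind Theorem \ref{th3.4}(a) redone with general coefficients (which is what the corollary needs, since the paper states it without proof). The split $f=B_{12}+B_{23}+B_{13}$ into binary blocks, the discriminant bound $6a_{iiii}a_{iijj}\ge 4a_{\cdots}^2$ obtained by multiplying the two hypotheses, and the strictness argument (a vanishing block with $x_i\neq 0$ forces degeneracy, hence equality in both hypotheses and $|x_i|=2|x_j|$ for the partner index, and chasing the cycle gives $|x_1|=2|x_2|=4|x_3|=8|x_1|$) all check out. The detour through the interval tensor and Theorem \ref{th2} is harmless but superfluous: your estimates use only $|a^z_{\cdots}|=|a_{\cdots}|$, so they apply to $\mathcal{A}$ directly.

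Part (b) is a genuine gap, and not merely because you defer the bookkeeping. The decomposition in the proof of Theorem \ref{th3.4}(b) controls the monomial $x_1x_2x_3^2$, i.e.\ the entry $a_{1233}$ (there $\delta_{1233}=1$ while $a_{1123}=a_{1223}=a_{1333}=0$), whereas condition (b) of the corollary leaves $a_{1123}$ (the monomial $x_1^2x_2x_3$) free and sets $a_{1233}=0$. No index relabelling carries $\{a_{1112},a_{2223},a_{1233}\}$ to $\{a_{1112},a_{2223},a_{1123}\}$, so ``the same plan'' does not transfer. Worse, condition (b) as printed is false, so no argument can close the gap: take $a_{iiii}=1$ and $a_{iijj}=1$ for all $i\neq j$, $a_{1112}=a_{2223}=a_{1123}=-1$, and every other independent entry zero. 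All hypotheses of (b) hold (including $a_{1333}=0$, which the corollary does not even require), yet
$$f_{\mathcal{A}}\!\left(1,1,\tfrac12\right)=2+\tfrac{1}{16}-4-2-6+6\left(1+\tfrac14+\tfrac14\right)=-\tfrac{15}{16}<0,$$
so $\mathcal{A}$ is not even positive semi-definite. What your plan (and Theorem \ref{th3.4}(b)) actually yields is the variant of (b) with $a_{1233}$ as the free off-diagonal entry and $a_{1123}=a_{1223}=a_{1333}=0$; you should prove that corrected statement, or flag the discrepancy, rather than assert the remaining work is routine.
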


\section{Conclusions}
This paper studies the positive definiteness and Hurwitz stability of interval tensors, clarifying that the positive definiteness of an interval tensor is equivalent to that of its symmetric interval tensor and auxiliary tensors $\mathcal{A}^z$. It also reveals that the stability of a symmetric interval tensor implies the stability of the interval tensor, and the stability of the symmetric interval tensor is equivalent to that of auxiliary tensors $\tilde{\mathcal{A}}^z$. In addition, sufficient conditions for the positive definiteness of 4th-order 3-dimensional interval tensors are provided.

\end{document}